\DeclareSymbolFont{symbolsC}{U}{txsyc}{m}{n}
\DeclareMathSymbol{\Searrow}{\mathrel}{symbolsC}{117}
\newtheorem{theorem}{Theorem}[section]
\newtheorem{proposition}[theorem]{Proposition}
\newtheorem{lemma}[theorem]{Lemma}
\newtheorem{corollary}[theorem]{Corollary}
\theoremstyle{definition}
\newtheorem{definition}[theorem]{Definition}
\newtheorem{eg}[theorem]{Example}
\newtheorem{remark}[theorem]{Remark}
\numberwithin{equation}{section}
\numberwithin{mytheorem}{subsection}
\def \no {\noindent}
\def \R{\mathbb{R}}
\def \N{\mathbb{N}}
\def \C {\mathcal{C}}
\def \U {\mathcal{U}}
\def \rar{\rightarrow}
\def \D{\mathcal{D}}
\newcommand\rddots{\ooalign{\scalebox{-1}[1]{$\ddots$}}}
\newcommand{\ring}[1]{\mathcal{R}_{#1}}
\newcounter{casenum}
\tikzstyle{vertex}=[circle,draw, inner sep=0pt, minimum size=4pt]
\newcommand{\vertex}{\node[vertex]}
\newcommand{\CF}[1]{\operatorname{CF}\left(\mathcal{J}_{#1}\right)}
\newcommand{\Tk}[2]{{\operatorname{Tk}_{#1}}(#2)}
\newcommand{\CC}[1]{{\mathcal{C}^{\mathscr{C}}_{#1}}}
\newcommand{\CR}[1]{{\mathcal{C}^{\mathscr{R}}_{#1}}}
\newcommand{\ideal}[1]{\langle{#1}\rangle}
\newcommand{\J}{\mathcal{J}}
\newcommand{\nei}[1]{\mathcal{J}_{#1}}
\newcounter{subcasenum}
\title{Properties of graphs of neural codes}
\author{Suhith K N\thanks{Suhith's research is partially supported by Inspire fellowship from DST grant IF190980.} \ and Neha Gupta}
\date{}
\begin{document}

	\maketitle
	\begin{abstract}
		A neural code on $ n $ neurons is a collection of subsets of the set $ [n]=\{1,2,\dots,n\} $. In this paper, we study some properties of graphs of neural codes. In particular, we study codeword containment graph (CCG) given by Chan et al. (SIAM J. on Dis. Math., 37(1):114-145,2017) and general relationship graph (GRG) given by Gross et al. (Adv. in App. Math., 95:65-95, 2018). We provide a sufficient condition for CCG to be connected. We also show that the connectedness and completeness of CCG is preserved under surjective morphisms between neural codes defined by A. Jeffs (SIAM J. on  App. Alg. and Geo., 4(1):99-122,2020). Further, we show that if CCG of any neural code $\C$ is complete with $|\C|=m$ then $\C \cong \{\emptyset,1,12,\dots,123\cdots m\}$ as neural codes. We also prove that a code whose CCG is complete is  open convex. Later, we show that if a code $\C$ with $|\C|>3$ has its CCG to be connected 2-regular then $|\C| $ is even. The GRG was defined only for degree two neural codes using the canonical forms of its neural ideal. We first define GRG for any neural code. Then, we show the behaviour of GRGs under the various elementary code maps.  At last, we compare these two graphs for certain classes of codes and see their properties.
		
	\end{abstract}
	\noindent{\bf Key Words}: {Neural codes, codeword containment graph, general relationship graph.}
	
	\noindent{\bf 2010 Mathematics Subject Classification}:  Primary 52A37, 92B99. Secondary 05C40, 05C99.
	
	\section{Introduction}
A neural code on $ n $ neurons is a collection of subsets of the set $ [n]=\{1,2,\dots,n\}. $ Given a collection of sets $ \U=\{U_1,U_2,\dots,U_n\} $ in $ \R^d, $ we associate a neural code $ \C(\U)= \{\sigma\subseteq [n]\mid \cap_{j\in\sigma}U_j \backslash \cup_{i\notin \sigma}U_i \not = \emptyset\}. $ A neural code $ \C $ is said to be realizable if there exists a collection of sets $ \U $ in $ \R^d $ such that $ \C=\C(\U). $ Additionally, we refer to $ \C $ as (open) convex realizable if $ U_i $'s are all (open) convex. Similarly, there is a notion of closed convex codes. Franke and Muthiah \cite{franke2018every} showed that every code is convex realizable. However, there are neural codes that are not open convex realizable and not closed convex realizable. 
 
 In these constrained circumstances, it is challenging to determine when a neural code is open convex realizable. Algebraic techniques, on the other hand, have been successful to gain insight into obstructions to codes being realizable. For a given code $ \C, $ Curto et al. \cite{curto2013neural} defined the associated neural ring, $ \ring{\C} $ and the neural ideal, $ \mathcal{J}_\C \subseteq \mathbb{F}_2[x_1,x_2,\dots,x_n] $. These algebraic have been defined to study the properties of a neural code. In order to identify whether the codes are open convex, Curto et al. \cite{curto2019algebraic} worked with $\J_\C$ and its standard generating set, the canonical form $\CF{\C}$. They provide conditions on canonical form of the code for certain families of convex codes including the class of \textit{intersection-complete codes} given by Cruz et al. \cite{cruz2019open}. 
 
 The other interesting technique used to study the open convexity of codes is to study an underlying graph of the code. Gross et al. \cite{gross2018neural} defined \textit{general relationship graph} for any neural code $\C$. This graph is solely dependent on canonical form, $\CF{\C}$ of the neural ideal $\J_\C$. Gross et al. \cite{gross2018neural} also defined  inductively pierced codes and gave a relation with general relationship graphs.    
  Curry et al. \cite{youngs2023recognizing} extended the work on inductively pierced codes. Initially they categorize these neural codes with respect to their general relationship graph. Later, they provide an open convex realization for these codes in Euclidean space $\R^d,$ for some $d>0$. This motivates us to further study the properties of general relationship graph.   On the other hand, \cite{chan2023nondegenerate} worked on closed convex codes. They define a graph called \textit{codeword containment graph}. They show that if this graph satisfies certain conditions then the the code is not closed convex.  
  
  In this paper, we will work with codeword containment graph (CCG) and general relationship graph (GRG), and explore their properties. The paper is structured as follows: In Section \ref{sec:pre}, we will introduce certain preliminaries required for the paper. In Section \ref{sec:ccg}, we will study CCG.  We further provide a sufficient condition for CCG to be connected. We also show that the connectedness and completeness of a CCG is preserved under surjective morphisms between neural codes \cite{jeffs2020morphisms}. Further, we show that if CCG of $\C$ is complete with $|\C|=m$ then $\C \cong \{\emptyset,1,12,\dots,123\cdots m\}$ as neural codes. We also prove that a code whose CCG is complete is  open convex. Later, we show that if a code $\C$ with $|\C|>3$ has its CCG as connected 2-regular then $|\C| $ is even. Moreover, for any $k>2$ we show that $  \{1,2,\dots,k,12,23,\dots,k-1k,1k\}$ is a code with with cardinality $2k$ and its CCG is connected 2-regular. In section \ref{sec:cf}, we will study canonical forms of pseudo-monomial ideals and define neural ideals for a given neural code. We further use the algorithm provided by Youngs \cite{youngs2014neural} to obtain canonical forms. With these findings we discuss how canonical forms behave under various elementary code maps \cite{curto2020neural}.  Note that the GRG is defined only for degree two neural codes \cite{gross2018neural} using the canonical forms of its neural ideal. In Section \ref{sec: grg},  we first define a general relationship complex for any given neural code and then use its 1-skeleton as GRG for any neural code. Later, we will we show the behaviour of GRGs under the elementary code maps using the results obtained in Section \ref{sec:cf}. In the last section, we compare these two graphs for two classes of codes called complete and 2-regular codes.

\section{Preliminaries} \label{sec:pre}
We will refer to neural codes as simply codes throughout the rest of the paper. In this section, we will understand two kinds of maps between codes. The first kind is the elementary code maps given by Curto and Youngs \cite{curto2020neural}, and the second kind of maps are given by Jeffs\cite{jeffs2020morphisms}, called as morphisms. 
\subsection{Neural ring homomorphisms} \label{nrh}
Curto et al. \cite{curto2013neural} associated a ring  $ \ring{\C} $ for a given code $ \C $, and  called it the neural ring associated to the code $\C$. For a given code $ \C, $ the neural ring, $\ring{\C}$ is given by, $ \ring{\C}=\mathbb{F}_2[y_1,\dots,y_n]/I_{\C}, $ where $ I_\C=\{f\in\mathbb{F}_2[x_1,x_2,\dots,x_n] | f(c)=0 \text{ for all } c\in \C\}. $  Further, Curto and Youngs \cite{curto2020neural}, studied the ring homomorphisms between two neural rings. They showed that there is a 1-1 correspondence between code maps\footnote{A code map $q:\C\to\D$ is a function that takes codewords of $\C$ to codewords of $\D$.} $ q:\C\rar \D $ and the ring homomorphisms $ \phi:\ring{\D}\rar \ring{\C}. $ The associated code map of the ring homomorphism $ \phi $ is denoted as $ q_\phi $.  They also demonstrated that $ |\C|=|\D| $ is the only condition under which $ \ring{\C} \cong \ring{\D} $. This means that the ring homomorphisms ignore the nature of codewords that are present in the code and only take its cardinality into account. So, to capture the essence of the codewords of a code and not just its cardinality, \cite{curto2020neural} added some additional pertinent conditions to the ring homomorphisms and named these maps as neural ring homomorphisms. We will now give detailed definition of neural ring homomorphisms.
\begin{definition}[Neural ring homomorphism]\cite[Definition 3.1]{curto2020neural}
	Let $ \C  $ and $ \D $ be codes on $ n $ and $ m $ neurons respectively, and let $ \ring{\C}= \mathbb{F}_2[y_1,\dots,y_n]/I_{\C} $ and $ \ring{\D}= \mathbb{F}_2[x_1,\dots,x_m]/I_{\D}  $ be the corresponding neural rings.  A ring homomorphism $ \phi:\ring{\D}\rar \ring{\C} $ is a neural ring homomorphism if $ \phi(x_j)\in\{y_i\mid i \in [n]\} \cup \{0,1\} $ for all $ j\in [m].$  We say that a neural ring homomorphism $ \phi $ is a neural ring isomorphism if it is a ring isomorphism and its inverse is also a neural ring homomorphism. 
\end{definition}    
\no Further, Curto and Youngs showed that given any neural ring homomorphism $ \phi $, the associated code map $ q_\phi $ can only be a composition of five elementary code maps. We state this theorem below.
\begin{theorem}\cite[Theorem 3.4]{curto2020neural} \label{thmnycc1}
	A map $ \phi:\ring{\D}\rar \ring{\C} $ is a neural ring homomorphism if and only if $ q_\phi $ is a composition of the following code maps: \\ 1. Permutation, 2. Adding a trivial neuron (or deleting a trivial neuron), 3. Duplication of a neuron (or deleting a neuron that is a duplicate of another), 4. Neuron projection (or deleting a not necessarily trivial neuron), 5. Inclusion (of one code into another).
	\\ \no 	Moreover, $ \phi $ is a neural ring isomorphism if and only if $ q_\phi $ is a composition of maps $ (1)-(3). $   
\end{theorem}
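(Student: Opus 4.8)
\noindent The plan is to read off, from a neural ring homomorphism $\phi\colon\ring{\D}\rar\ring{\C}$, the function $\tau\colon[m]\to[n]\sqcup\{0,1\}$ it induces on generators (so $\tau(j)=i$ precisely when $\phi(x_j)=y_i$, and $\tau(j)\in\{0,1\}$ precisely when $\phi(x_j)$ is that constant), then to translate $\phi$ into a coordinatewise description of the associated code map $q_\phi$, and finally to recognize that code map as a prescribed composite of the five elementary maps. Using the code-map/ring-homomorphism correspondence recalled above --- which realizes $q_\phi$ by pulling back evaluation homomorphisms at codewords --- one gets $(q_\phi(c))_j=c_{\tau(j)}$ when $\tau(j)\in[n]$, and $(q_\phi(c))_j$ equal to the constant $0$ or $1$ otherwise; moreover well-definedness of $\phi$ on $\ring{\D}=\mathbb{F}_2[x_1,\dots,x_m]/I_\D$ forces $q_\phi(\C)\subseteq\D$. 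The reverse implication will reduce to checking the five maps individually together with closure of neural ring homomorphisms under composition.

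\smallskip
\noindent For the ``only if'' direction I would exhibit $q_\phi$ as the following composite, in this order: (i) delete from $\C$ every neuron $i\in[n]$ not in the image of $\tau$ (each such step is a neuron projection, i.e.\ deletion of a not-necessarily-trivial neuron); (ii) for each surviving neuron $i$ with $|\tau^{-1}(i)|=k\ge 2$, perform $k-1$ neuron duplications; (iii) for each $j\in[m]$ with $\tau(j)\in\{0,1\}$ add a trivial neuron, always-off if $\tau(j)=0$ and always-on if $\tau(j)=1$; (iv) apply the permutation carrying the resulting neuron labels into the order prescribed by $\tau$, which produces exactly the code $q_\phi(\C)$ on $[m]$ neurons; (v) include $q_\phi(\C)\hookrightarrow\D$. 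Composing the coordinate formulas of steps (i)--(v) returns $(q_\phi(c))_j=c_{\tau(j)}$ together with the constant coordinates, so this composite of elementary code maps coincides with $q_\phi$, which is what ``only if'' asserts.

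\smallskip
\noindent The ``if'' direction is then formal. One checks directly that each of the five elementary code maps is the code map of a ring homomorphism sending every generator into $\{y_i\mid i\in[n]\}\cup\{0,1\}$, and that a composite of such ring homomorphisms is again of this form (if $\psi(x_j)$ lies in $\{z_k\}\cup\{0,1\}$ then $\phi(\psi(x_j))$ lies in $\{y_i\mid i\in[n]\}\cup\{0,1\}$); since the assignment $q\mapsto\phi$ is a contravariant functor, the ring homomorphism attached to a composite of elementary code maps is a neural ring homomorphism. For the ``moreover'' clause, if $\phi$ is in addition a ring isomorphism then $\tau$ must be a bijection $[m]\to[n]$ up to the harmless ambiguities of trivial neurons and duplicates: $\phi(x_j)=0$ is impossible unless $x_j=0$ already in $\ring{\D}$ (neuron $j$ trivially always-off), $\phi(x_j)=1$ forces neuron $j$ trivially always-on, and $\phi(x_{j_1})=\phi(x_{j_2})$ forces $x_{j_1}=x_{j_2}$ in $\ring{\D}$ (neurons $j_1,j_2$ duplicates). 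Hence steps (i) and (v) are identities and only maps of types (1)--(3) occur; conversely each of (1)--(3) preserves $|\C|$ and is invertible within the class, so any composite of them yields a neural ring isomorphism.

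\smallskip
\noindent The step I expect to need the most care is the bookkeeping in (i)--(iv): the duplications and trivial-neuron insertions must act on the correctly re-indexed neurons as the ambient neuron set grows, each intermediate object must be verified to be a genuine code, and the terminal permutation must match $\tau$ on the nose. Fixing the order deletions $\to$ duplications $\to$ insertions $\to$ permutation is precisely what lets the composite of the coordinate maps collapse cleanly to $(q_\phi(c))_j=c_{\tau(j)}$ without interference between steps.
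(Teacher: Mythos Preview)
The paper does not contain a proof of this theorem at all: it is quoted verbatim from \cite[Theorem 3.4]{curto2020neural} and used as a black box. There is therefore nothing to compare your proposal against in this paper.

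That said, your sketch is essentially the argument given in the original Curto--Youngs paper: extract from $\phi$ the assignment $\tau\colon[m]\to[n]\sqcup\{0,1\}$ on generators, translate that into a coordinatewise description of $q_\phi$, and then read off the decomposition projections $\to$ duplications $\to$ trivial insertions $\to$ permutation $\to$ inclusion. The converse direction and the closure under composition are handled as you indicate. One place where your sketch is thinner than the original is the ``moreover'' clause: the claim that step (i) becomes the identity when $\phi$ is a neural ring isomorphism requires showing that every $y_i$ actually occurs as some $\phi(x_j)$ (equivalently, that $\tau$ hits all of $[n]$ modulo trivial/duplicate neurons). This does not follow immediately from $\phi$ being a ring isomorphism, since surjectivity of $\phi$ only says $y_i$ lies in the subring generated by the $\phi(x_j)$, not that it equals one of them; the argument in \cite{curto2020neural} uses that $\phi^{-1}$ is also a neural ring homomorphism to close this gap. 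You gesture at this but do not spell it out.
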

\no These five types of code maps are referred as elementary code maps. We will work with these maps in section \ref{sec: grg}. 
\subsection{Morphisms}
In this part of the section, we will give a detailed description of morphisms introduced by Jeffs \cite{jeffs2020morphisms}. 
\begin{definition} \cite[Definition 1.1]{jeffs2020morphisms}
	Let $\C$ be a code on $n$ neurons, and let $\sigma\subseteq[n]$. The \emph{trunk} of $\sigma$ in $\C$ is defined to be the set $$\operatorname{Tk}_\C(\sigma)= \{c\in\C \mid \sigma \subseteq c\}.$$   
\end{definition}
\no A subset of $\C$  is called a trunk in $\C$  if it is empty, or equal to $\operatorname{Tk}_\C(\sigma)$ for some $\sigma  \subseteq  [n]$.
\begin{definition} \cite[Definition 1.2]{jeffs2020morphisms} \label{jeffmorphis}
	Let $\C$  and $\D$  be codes. A function $f : \C  \rightarrow  \D $ is a \emph{morphism} if for every
	trunk $T \subseteq  \D$  the pre-image $f^{-1} (T )$ is a trunk in $\C$. A morphism is an  \emph{isomorphism} if it has an inverse function which is also a morphism. 
\end{definition}
If there exists a code map $q:\C\to\D$ which is an isomorphism then we say that $\C$ and $\D$ are isomorphic to each other. Further, observe that the elementary code maps of Theorem \ref{thmnycc1} also satisfies the above definition. So, given any neural ring homomorphism $\phi:\ring{\D} \to \ring{\C}$, the associated code map $q_\phi: \C\to \D$ is a morphism. Trunks of single neurons play a vital role and Jeffs calls them simple trunks. 
\begin{definition}\cite[Definition 2.4]{jeffs2020morphisms}
	For any $i\in[n]$ trunks of the form $\operatorname{Tk}(\{i\})$  will be called \emph{simple trunks} and be denoted by
	$\operatorname{Tk}(i)$.
\end{definition} 
In the next proposition Jeffs\cite{jeffs2020morphisms} explains that it is enough to study the inverse images of simple trunks to see if a map is a morphisms, i.e.,
\begin{proposition}\cite[Proposition 2.5]{jeffs2020morphisms} \label{propsimtrun}
	Let $\C  \subseteq  2^{[n]}$ and $\D  \subseteq  2^{[m]}$ be codes. A function $f : \C  \rightarrow  \D$  is a morphism
	if and only if for every $i \in  [m]$, $f^{- 1} (\Tk{\D}{i})$ is a trunk in $\C$.
\end{proposition}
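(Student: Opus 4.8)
The plan is to prove the two implications separately. The forward direction is immediate: if $f$ is a morphism, then by Definition \ref{jeffmorphis} the preimage of every trunk in $\D$ is a trunk in $\C$, and since each simple trunk $\Tk{\D}{i}$ is in particular a trunk, $f^{-1}(\Tk{\D}{i})$ is a trunk in $\C$ for every $i\in[m]$. All the content lies in the converse, and there the strategy is to reduce an arbitrary trunk of $\D$ to an intersection of simple trunks, and then observe that trunks of $\C$ are closed under the corresponding intersection.

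For the converse, I would assume $f^{-1}(\Tk{\D}{i})$ is a trunk in $\C$ for every $i\in[m]$ and fix an arbitrary trunk $T\subseteq\D$. First I would dispose of the degenerate cases: if $T=\emptyset$ then $f^{-1}(T)=\emptyset$, which is a trunk by definition, and if $T=\D=\Tk{\D}{\emptyset}$ then $f^{-1}(T)=\C=\Tk{\C}{\emptyset}$, again a trunk. Otherwise $T=\Tk{\D}{\sigma}$ for some nonempty $\sigma\subseteq[m]$, and directly from the definition $\Tk{\D}{\sigma}=\{c\in\D\mid\sigma\subseteq c\}$ one reads off the decomposition $\Tk{\D}{\sigma}=\bigcap_{i\in\sigma}\Tk{\D}{i}$. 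Since taking preimages commutes with (finite) intersections,
\[
f^{-1}(T)=f^{-1}\!\left(\bigcap_{i\in\sigma}\Tk{\D}{i}\right)=\bigcap_{i\in\sigma}f^{-1}(\Tk{\D}{i}),
\]
which by hypothesis is a finite intersection of trunks in $\C$.

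It then remains only to record the elementary lemma that a finite intersection of trunks in a code is again a trunk: if some factor is empty, the intersection is empty and hence a trunk by definition; otherwise, writing each factor as $\Tk{\C}{\tau_i}$, one has $\bigcap_{i\in\sigma}\Tk{\C}{\tau_i}=\Tk{\C}{\bigcup_{i\in\sigma}\tau_i}$, directly from the definition of a trunk. Applying this to the sets $f^{-1}(\Tk{\D}{i})$ shows $f^{-1}(T)$ is a trunk in $\C$, and since $T$ was arbitrary, $f$ is a morphism. I do not expect a genuine obstacle in this argument; the only point requiring care is the bookkeeping around the empty trunk and the case $\sigma=\emptyset$, so that the reduction from an arbitrary trunk to an intersection of simple trunks, and the closure-under-intersection lemma, are each phrased so these edge cases are absorbed cleanly rather than handled as special pleading.
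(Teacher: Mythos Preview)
Your argument is correct and is exactly the standard proof: decompose an arbitrary trunk as an intersection of simple trunks, pull back through $f^{-1}$ using that preimages commute with intersections, and then use the closure of trunks under finite intersection via $\bigcap_i \Tk{\C}{\tau_i}=\Tk{\C}{\bigcup_i \tau_i}$. Note, however, that the paper does not supply its own proof of this proposition; it merely quotes the statement from \cite{jeffs2020morphisms}, so there is nothing in the paper to compare your approach against beyond observing that your proof is the natural one and matches what one finds in Jeffs's original.
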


Further Jeffs \cite{jeffs2020morphisms} has studied various properties of morphisms. Now, we mention one such property that we will use in Section \ref{sec:ccg}. 
\begin{proposition}\cite[Proposition 2.6]{jeffs2020morphisms}
Morphisms are monotone: if $f : \C  \rightarrow  \D$  is a morphism and $ c_1 , c_2 \in  \C $ are
	such that $c_1 \subseteq  c_2$ , then $f (c_1 )\subseteq  f (c_2 ).$ \label{proptrunmomon}
\end{proposition}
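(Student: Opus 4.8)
The plan is to reduce the set containment $f(c_1)\subseteq f(c_2)$ to a statement about membership in simple trunks, and then exploit the fact that a nonempty simple-type trunk $\Tk{\C}{\tau}$ is upward closed with respect to inclusion of codewords. Concretely, I would fix an arbitrary neuron $i\in f(c_1)$ and unwind the definition of a simple trunk in $\D$: saying $i\in f(c_1)$ is the same as saying $f(c_1)\in\Tk{\D}{i}$, i.e. $c_1\in f^{-1}\!\left(\Tk{\D}{i}\right)$. The goal then becomes to show $c_2\in f^{-1}\!\left(\Tk{\D}{i}\right)$ as well, since that gives $f(c_2)\in\Tk{\D}{i}$, i.e. $i\in f(c_2)$, and letting $i$ range over $f(c_1)$ finishes the argument.

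The core step uses that $f$ is a morphism. By Proposition \ref{propsimtrun} the set $f^{-1}\!\left(\Tk{\D}{i}\right)$ is a trunk in $\C$, and it is nonempty because it contains $c_1$; hence it equals $\Tk{\C}{\tau}$ for some $\tau\subseteq[n]$. Now I invoke the elementary observation about trunks: $c_1\in\Tk{\C}{\tau}$ means precisely $\tau\subseteq c_1$, and combining this with the hypothesis $c_1\subseteq c_2$ yields $\tau\subseteq c_2$, so $c_2\in\Tk{\C}{\tau}=f^{-1}\!\left(\Tk{\D}{i}\right)$. This is exactly what was needed, so $i\in f(c_2)$, and therefore $f(c_1)\subseteq f(c_2)$.

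There is no serious obstacle here; the only points requiring care are the translation dictionary between ``the neuron $i$ lies in the image codeword'' and ``the image codeword lies in the simple trunk $\Tk{\D}{i}$'', and the fact that nonemptiness of $f^{-1}\!\left(\Tk{\D}{i}\right)$ is what licenses writing it as a genuine trunk $\Tk{\C}{\tau}$ rather than the empty set (the empty-trunk case of Definition \ref{jeffmorphis} is vacuously excluded once $c_1$ is a witness). Everything else is the trivial monotonicity of the relation $\tau\subseteq c$ in the variable $c$.
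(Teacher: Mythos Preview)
Your argument is correct. Note, however, that the paper does not supply its own proof of this proposition: it is quoted verbatim from \cite{jeffs2020morphisms} and used as a black box. What you have written is essentially the standard proof one finds there---reducing $i\in f(c_1)$ to $c_1\in f^{-1}(\Tk{\D}{i})$, invoking the morphism property to write this preimage as a trunk $\Tk{\C}{\tau}$, and then using upward-closure of trunks. One minor remark: you cite Proposition~\ref{propsimtrun} for the fact that $f^{-1}(\Tk{\D}{i})$ is a trunk, but this already follows directly from Definition~\ref{jeffmorphis}, since a simple trunk is in particular a trunk; no characterization result is needed.
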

\section{Codeword containment graph (CCG)} \label{sec:ccg}
In this section we first see the definition of codeword containment graph. This graph was first used by Chan et al. \cite{chan2023nondegenerate} to study closed convex codes. We will provide a sufficient condition for this graph to be connected. Further, we will show that the morphisms will preserve the connectedness property of this graph. 
\begin{definition}[Codeword containment graph (CCG)]\cite{chan2023nondegenerate} 
	The codeword containment graph of a code $ \C $ is the (undirected) graph with vertex set consisting of all codewords of $ \C $ and edge set $ \{(\sigma,\tau)\mid \sigma\subsetneq \tau \text{ or } \tau\subsetneq \sigma\}. $ Denote $ G_\C $  to be the codeword containment graph of the code $ \C. $
\end{definition}
\begin{eg}
We show the CCG for codes $\{1,2,13,123\}$ and $\{13,125,1235,1245\}.$ 
\begin{figure}[h]
	\centering
\begin{subfigure}[b]{0.5\linewidth}
	\centering
		\begin{tikzpicture}  
		\vertex (1) at (0,0) [label=below:1] {};
		\vertex (2) at (1,0) [label=below:2]{};
		\vertex (13) at (0,1) [label=left:13]{};
		\vertex (123) at (1,1.5) [label=right:123]{};
		\path[-]
		(1) edge (13)
		(2) edge (123)
		(1) edge (123)
		(13) edge (123)
		;
	\end{tikzpicture}
	\caption{$G_\C$ when $\C=\{1,2,13,123\}$}
\end{subfigure}
	\begin{subfigure}[b]{0.2\linewidth}
		\centering 
		\begin{tikzpicture}  
			\vertex (13) at (0,0) [label=below:13] {};
			\vertex (125) at (1,0) [label=below:125]{};
			\vertex (1235) at (0,1) [label=left:1235]{};
			\vertex (1245) at (1,1) [label=right:1245]{};
			\path[-]
			
			(13) edge (1235)
			(125) edge (1235)
			
			(125) edge (1245)
			;
		\end{tikzpicture}
		\caption{$G_\C$ when  $\C=\{13,125,1235,1245\}$}
		\label{ncccg}
	\end{subfigure}
	\caption{Codeword containment graphs}
	\label{ccg}
\end{figure}
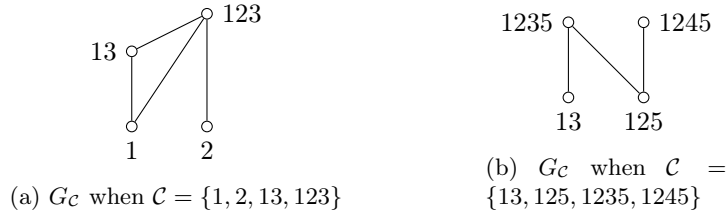
\end{eg}
\subsection{Connectedness of CCG}
In this part of the section we will discuss the connectedness of CCG. We will start with a trivial remark. 
\begin{remark}
	If $ \emptyset\in\C $ then $ G_\C $ is connected. This happens because the vertex $\emptyset$ has an edge with any vertex $\sigma$ of $G_\C$. Therefore there is always a path between any two vertices $\sigma$ and $\tau $ of $G_\C$ via the vertex $\emptyset$. For example look at Figure \ref{conempt}.
	\begin{figure}[h!]
		\centering
		\begin{tikzpicture}  
			\vertex (1) at (0,1) [label=above:1] {};
			\vertex (2) at (1,0) [label=below:2]{};
			\vertex (4) at (-2,-.5) [label=left:4]{};
			\vertex (0) at (-1,-0.5) [label=below:$\emptyset$]{};
			\vertex (123) at (1,1.5) [label=right:123]{};
			\path[-]
			
			(2) edge (123)
			(1) edge (123)
			(0) edge (4)
			(1) edge (0)
			(2) edge (0)
			(0) edge (123)
			;
		\end{tikzpicture}
		\caption{$G_\C$ when $\C=\{\emptyset,1,2,123,4\}$}
		\label{conempt}
	\end{figure}
	
\end{remark}
Note that,  however CCG can still be connected without $\emptyset\in \C$. So we now look at a different sufficient condition for a CCG to be connected. Before we understand the same, we will learn about simplicial complex of a code. 

\begin{definition}[Simplicial complex of a code] Given a code $ \C $ on $ n $ neurons we denote the set $ \Delta(\C) $ as the simplicial complex of the code and it is given by $$\Delta(\C)=\{\alpha\subseteq[n]\mid\alpha\subseteq\beta, \text{ for some } \beta\in\C\}.$$ Note that by the above definition, we always have $ \C\subseteq \Delta(\C). $ \label{defscofc}
\end{definition} 
\no The next result states the sufficient condition for the codeword containment graph to be connected.
\begin{proposition}
	The codeword containment graph 	$ G_\C$ is connected if for every pair $ \sigma,\tau\in\C $, $ \sigma\cup\tau\in\Delta(\C). $ 
\end{proposition}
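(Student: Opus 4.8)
The plan is to show that from any codeword $\sigma \in \C$ there is a path in $G_\C$ to a fixed codeword, which will establish connectedness. First I would fix an arbitrary facet $F$ of $\Delta(\C)$, i.e.\ a maximal element of $\Delta(\C)$ under inclusion; since $\Delta(\C)$ is generated by $\C$, every facet of $\Delta(\C)$ is in fact an element of $\C$. The strategy is then to prove that every $\sigma \in \C$ is connected by a path in $G_\C$ to some facet, and that all facets of $\Delta(\C)$ lie in a single connected component.

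For the first part, given $\sigma \in \C$, either $\sigma$ is itself a facet, or there is some $\tau \in \C$ with $\sigma \subsetneq \tau$ (because $\sigma$, not being maximal in $\Delta(\C)$, is strictly contained in some element of $\Delta(\C)$, hence in some element of $\C$ containing it); in the latter case $(\sigma,\tau)$ is an edge of $G_\C$ and we may repeat the argument with $\tau$. Since $\C$ is finite and each step strictly increases cardinality, this process terminates at a facet. So far the hypothesis $\sigma \cup \tau \in \Delta(\C)$ has not been used; it enters precisely in showing that any two facets $F_1, F_2 \in \C$ are in the same component. Here is where I would use it: $F_1 \cup F_2 \in \Delta(\C)$ means $F_1 \cup F_2 \subseteq \beta$ for some $\beta \in \C$; then $F_1 \subseteq \beta$ and $F_2 \subseteq \beta$, and since $F_1, F_2$ are facets (maximal in $\Delta(\C)$) and $\beta \in \C \subseteq \Delta(\C)$, maximality forces $F_1 = \beta = F_2$. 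Thus $\Delta(\C)$ has a \emph{unique} facet under the hypothesis, call it $F$, and every $\sigma \in \C$ has a path to $F$.

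Therefore any two vertices $\sigma,\tau$ of $G_\C$ are joined by concatenating the path from $\sigma$ up to $F$ with the reverse of the path from $\tau$ up to $F$, proving $G_\C$ is connected. The main subtlety to get right is the claim that when $\sigma$ is not maximal in $\Delta(\C)$, the element of $\Delta(\C)$ strictly containing it can be taken to lie in $\C$ (not merely in $\Delta(\C)$): this follows by taking a facet of $\Delta(\C)$ above $\sigma$, which is automatically an element of $\C$ by Definition \ref{defscofc}, so in fact one can climb directly to that facet in a single edge once $\sigma \in \C$. I expect no serious obstacle beyond carefully invoking finiteness of $\C$ and the maximality argument that collapses all facets to one; the hypothesis does all the real work in that collapse.
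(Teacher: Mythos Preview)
Your proof is correct, but it takes a longer route than the paper's. The paper argues in two lines: given any $\sigma,\tau\in\C$, the hypothesis gives some $\alpha\in\C$ with $\sigma\cup\tau\subseteq\alpha$, so $\sigma$--$\alpha$--$\tau$ is a path of length at most $2$, and we are done. There is no need to single out facets or to argue about maximality at all.

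What your approach buys is an additional structural consequence that the paper does not state explicitly: under the hypothesis, $\Delta(\C)$ has a \emph{unique} facet $F$, and hence every codeword is adjacent to $F$ in $G_\C$ (so $G_\C$ is in fact a cone over $F$). This is a genuinely stronger conclusion than mere connectedness. The cost is that you spend effort on the climbing argument and the maximality collapse, whereas the paper's direct argument uses the hypothesis once per pair and immediately produces the connecting vertex. Both yield the same diameter bound of $2$; your version just identifies the universal hub.
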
 
\begin{proof}
	Consider $ \sigma,\tau\in\C $.	Let $ \sigma\cup\tau\in\Delta(\C) $, then there exists a $ \alpha\in\C $ such that $ \sigma\cup\tau \subseteq \alpha. $  This implies $ \sigma\subset \alpha $ and $ \tau\subseteq \alpha. $ So, $ (\sigma,\alpha) $ and $ (\alpha,\tau) $ are edges in $ G_\C. $ Therefore $ \sigma \alpha\tau
	$ is a path in $ G_\C. $ Hence $ \sigma,\tau  $ is connected. And as $ \sigma,\tau $ is an arbitrary pair in $\C $ we get that $ G_\C $ is a connected graph.
\end{proof}
\begin{remark}\begin{enumerate}
		\item Note that if a codeword containment graph $G_\C$ satisfies the above condition then the distance between any two vertices $\sigma$ and $\tau$ is at most 2.  
		\item  Consider the code  $ \C=\{13,125,1235,1245\}. $ The CCG for this code can be seen in Figure \ref{ncccg}. The graph is connected, however it does not satisfy the above sufficient condition. Therefore, the converse is not true in general.   We leave it as a open question to check  when the converse is true.
	\end{enumerate}
\end{remark}

Next, we show that the surjective morphisms (Definition \ref{jeffmorphis}) preserve connectedness of the codeword containment graph.  Note that these morphisms are monotone (Proposition \ref{proptrunmomon}). 
\begin{proposition}
	Let $ f:\C\to\D $ be a morphism.  If $ G_\C$ is connected then $ G_{f(\C)} $ is connected. \label{thmorcon}
\end{proposition}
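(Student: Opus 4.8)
The plan is to lift a path in $G_\C$ to a walk in $G_{f(\C)}$, using that morphisms are monotone (Proposition \ref{proptrunmomon}). First I would fix two arbitrary vertices $a,b$ of $G_{f(\C)}$; by definition of the image $f(\C)$ there are codewords $\sigma,\tau\in\C$ with $f(\sigma)=a$ and $f(\tau)=b$. Since $G_\C$ is connected, I may choose a path $\sigma=c_0,c_1,\dots,c_k=\tau$ in $G_\C$, so that for each $i$ either $c_i\subsetneq c_{i+1}$ or $c_{i+1}\subsetneq c_i$.

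Next I would apply $f$ termwise to obtain the sequence $a=f(c_0),f(c_1),\dots,f(c_k)=b$ of vertices of $G_{f(\C)}$. By Proposition \ref{proptrunmomon}, $f$ is monotone, so for each $i$ we get $f(c_i)\subseteq f(c_{i+1})$ or $f(c_{i+1})\subseteq f(c_i)$. The one point that needs care — and the place I expect the only genuine obstacle — is that monotonicity does not yield \emph{strict} containment: a morphism can collapse an edge of $G_\C$ to a single vertex of $G_{f(\C)}$ (for instance when $f$ identifies two comparable codewords). Thus the image sequence is in general only a walk with possible repetitions, in which consecutive terms are either equal or joined by an edge of $G_{f(\C)}$.

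Finally I would delete consecutive repeats from $f(c_0),\dots,f(c_k)$ to obtain a reduced sequence $a=w_0,w_1,\dots,w_m=b$ with $w_j\neq w_{j+1}$ for every $j$. Each pair $w_j,w_{j+1}$ arises from a transition $f(c_i)\neq f(c_{i+1})$ in the original sequence, so one of $w_j,w_{j+1}$ is a proper subset of the other, i.e. $(w_j,w_{j+1})$ is an edge of $G_{f(\C)}$. Hence $w_0,\dots,w_m$ is a genuine walk in $G_{f(\C)}$ joining $a$ and $b$, and since $a,b$ were arbitrary, $G_{f(\C)}$ is connected. Once the collapsing step absorbs the non-strictness of monotonicity, the rest is routine; one could also phrase the argument more compactly by noting that monotonicity sends each connected component of $G_\C$ into a single connected component of $G_{f(\C)}$, and that $f$ is surjective onto $f(\C)$, so $G_{f(\C)}$ has at most as many components as $G_\C$.
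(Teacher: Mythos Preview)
Your argument is correct and follows essentially the same route as the paper: lift to preimages, take a path in $G_\C$, push forward via monotonicity to a walk in $G_{f(\C)}$, and conclude. If anything you are more careful than the paper in explicitly addressing the possible collapse of strict inclusions under $f$ by deleting consecutive repeats; the paper simply calls the image sequence a walk and extracts a path.
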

\begin{proof}
Suppose, $|f(\C)|=1$, then there is nothing to prove. So, we assume $|f(\C)|>1$.	Consider  $ f(\sigma) \not= f(\tau)\in f(\C).$ We show that there is a path between $ f(\sigma) $ and $ f(\tau). $ Note that $ \sigma,\tau\in\C. $ Also, as $ G_\C$ is connected there exists a path, say,  $ \gamma_0^{}\gamma_1^{}\cdots\gamma_m^{} $, between $ \gamma_0=\sigma $ and $ \gamma_m=\tau. $ Furthermore, either $ \gamma_i \subsetneq \gamma_{i+1} $ or $ \gamma_{i+1} \subsetneq \gamma_{i}. $ Also, as $ f $ is monotone, either $ f(\gamma_i) \subseteq f(\gamma_{i+1}) $ or $ f{(\gamma_{i+1})} \subsetneq f(\gamma_{i}). $ Therefore $ f(\gamma_0^{})f(\gamma_1^{})\cdots f(\gamma_m^{}) $ is a walk from $ f(\sigma) $ to $ f(\tau). $ Hence there exists a path between $ f(\sigma) $ and $ f(\tau). $ So, $ G_{f(\C)} $ is connected. 
\end{proof}
\begin{remark}
	Morphisms need not preserve disconnectedness of a codeword containment graph.
	For example, consider  $ \C =\{1,3,12\} $ and delete the 3rd neuron to get $ \C'= \{\emptyset,1, 12\}. $ Note that, deleting a neuron is a morphism. However $G_\C$ is disconnected and $G_{\C'}$ is connected. Look at Figure \ref{dcccg} for $G_\C$ and $G_{\C'}$.
	\end{remark}
\begin{figure}[h!]
	\centering
	\begin{subfigure}[b]{0.5\linewidth}
		\centering
		\begin{tikzpicture}  
			\vertex (1) at (0,0) [label=below:1] {};
			\vertex (12) at (1,0) [label=below:12]{};
			\vertex (3) at (0,1) [label=left:3]{};
			\path[-]
			(1) edge (12)
			;
		\end{tikzpicture}
		\caption{$G_\C$ when $\C=\{1,12,3\}$}
	\end{subfigure}
	\begin{subfigure}[b]{0.3\linewidth}
		\centering 
		\begin{tikzpicture}  
			\vertex (0) at (0,0) [label=below:$\emptyset$] {};
			\vertex (1) at (1,0) [label=below:1]{};
			\vertex (12) at (0,1) [label=left:12]{};
			\path[-]
			
			(0) edge (1)
			(0) edge (12)
			
			(1) edge (12)
			;
		\end{tikzpicture}
		\caption{$G_{\C'}$ when  $\C'=\{\emptyset,1,12\}$}
	\end{subfigure}
	\caption{}
	\label{dcccg}
\end{figure}
\begin{corollary}
		Let $ f:\C\to\D $ be an isomorphism.  Then $ G_\C$ is connected if and only if $ G_{f(\C)} $ is connected.
\end{corollary}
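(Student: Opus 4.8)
The plan is to deduce this directly from Proposition \ref{thmorcon} by applying it to $f$ and to its inverse. First I would recall that an isomorphism in the sense of Definition \ref{jeffmorphis} is in particular a morphism, and moreover it is a bijection, so that $f(\C)=\D$ and the inverse function $f^{-1}:\D\to\C$ exists and is itself a morphism (this is exactly the content of the word ``isomorphism'' in Definition \ref{jeffmorphis}). With this observation in hand, one direction is immediate: if $G_\C$ is connected, then since $f:\C\to\D$ is a morphism, Proposition \ref{thmorcon} gives that $G_{f(\C)}$ is connected.

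For the converse, I would apply Proposition \ref{thmorcon} to the morphism $f^{-1}:\D\to\C$. Since $f$ is a bijection we have $f(\C)=\D$, so the hypothesis ``$G_{f(\C)}$ is connected'' is the same as ``$G_\D$ is connected.'' Applying Proposition \ref{thmorcon} to $f^{-1}$ then yields that $G_{f^{-1}(\D)}$ is connected; but $f^{-1}(\D)=\C$, so $G_\C$ is connected. Combining the two implications gives the stated equivalence.

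There is essentially no hard step here; the only point that needs a word of care is the identification $f(\C)=\D$ together with the fact that $f^{-1}$ qualifies as a morphism, both of which are guaranteed by the definition of isomorphism. Everything else is a direct invocation of Proposition \ref{thmorcon}, so the corollary follows with a two-line argument.
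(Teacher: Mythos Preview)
Your proposal is correct and is exactly the intended argument: the paper states the corollary without proof, leaving it as an immediate consequence of Proposition~\ref{thmorcon} applied to $f$ and to $f^{-1}$, which is precisely what you do.
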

\begin{remark}
	As we know that all five elementary code maps described in Proposition \ref{thmnycc1} are also  morphisms we get that they all preserve connectedness via Theorem \ref{thmorcon}.
\end{remark}
\subsection{Complete CCG and morphisms}
In this subsection we will look at when the codeword containment graph is a complete graph. Recall that a graph $G$ is said to be complete if and only if every pair of distinct vertices of the graph are adjacent to each other. We will show that the surjective morphisms preserve the completeness of CCG. 

\begin{definition}[Complete code]
	A code $\C$ is said to be a \emph{complete code} if the codeword containment graph $G_\C$ of $\C$ is a complete graph. 
\end{definition}
\begin{eg}
	Some examples of complete codes are  $\{\emptyset,1,123,1234\}$, $\{\emptyset,1,12\}$,  $\{1,12,123\}$, etc.
	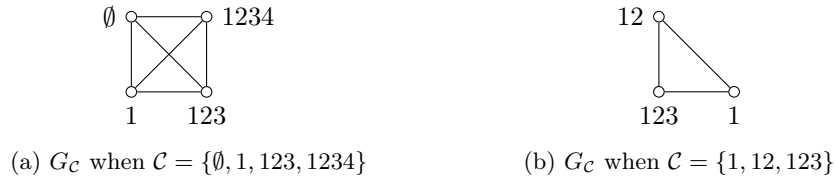
\begin{figure}[h!]
		\centering
		\begin{subfigure}[b]{0.5\linewidth}
			\centering
			\begin{tikzpicture}  
				\vertex (1) at (0,0) [label=below:1] {};
				\vertex (123) at (1,0) [label=below:123]{};
				\vertex (emp) at (0,1) [label=left:$\emptyset$]{};
				\vertex (1234) at (1,1) [label=right:1234]{};
				\path[-]
				(1) edge (emp)
				(1) edge (123)
				(1) edge (1234)
				(123) edge (1234)
				(123) edge (emp)
				(emp) edge (1234)
				;
			\end{tikzpicture}
			\caption{$G_\C$ when $\C=\{\emptyset,1,123,1234\}$}
		\end{subfigure}
		\begin{subfigure}[b]{0.3\linewidth}
			\centering 
			\begin{tikzpicture}  
				\vertex (0) at (0,0) [label=below:$123$] {};
				\vertex (1) at (1,0) [label=below:1]{};
				\vertex (12) at (0,1) [label=left:12]{};
				\path[-]
				
				(0) edge (1)
				(0) edge (12)
				
				(1) edge (12)
				;
			\end{tikzpicture}
			\caption{$G_{\C}$ when  $\C=\{1,12,123\}$} \label{comccg3}
		\end{subfigure}
		\caption{Complete codeword containment graph}
		\label{comccg}
	\end{figure}
	 
\end{eg}
Next, we have a remark on complete codes. 
\begin{remark}
		A code $\C$ is a complete code if and only if for all pairs $\sigma,\tau\in \C$ either $\sigma\subsetneq \tau $ or $\tau\subsetneq \sigma$. 
		Note that, the complete code along with inclusion is a total order. 
\end{remark}
We observe that the monotone property of a morphism (Refer to Proposition \ref{proptrunmomon}) gives us that it preserves complete codes.  We state the same in the next proposition. 
\begin{proposition}
	Let $ f:\C\to\D $ be a morphism. If $\C$ is a complete code then so is $f(\C)$. \label{thmorcom}
\end{proposition}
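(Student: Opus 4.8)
The plan is to reduce everything to the total-order description of complete codes recorded in the remark just before this proposition: a code is complete exactly when its codewords form a chain under inclusion, i.e.\ any two distinct codewords are related by strict containment. Hence it suffices to check that for every pair of distinct elements $a,b\in f(\C)$, either $a\subsetneq b$ or $b\subsetneq a$.

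First I would fix distinct $a,b\in f(\C)$ and pick codewords $\sigma,\tau\in\C$ with $f(\sigma)=a$ and $f(\tau)=b$. Since $f$ is a function and $a\neq b$, necessarily $\sigma\neq\tau$. As $\C$ is a complete code, $\sigma$ and $\tau$ are comparable under inclusion; without loss of generality $\sigma\subsetneq\tau$. Applying the monotonicity of morphisms (Proposition \ref{proptrunmomon}) to the inclusion $\sigma\subseteq\tau$ yields $f(\sigma)\subseteq f(\tau)$, that is $a\subseteq b$. Combining with $a\neq b$ gives $a\subsetneq b$. Since $a,b$ were an arbitrary pair of distinct elements of $f(\C)$, the code $f(\C)$ is complete, which is what we want.

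I do not expect a genuine obstacle here: the whole argument rests on the monotonicity already proved in Proposition \ref{proptrunmomon} together with the chain characterization of complete codes. The only point that needs a little care is making the containment strict rather than merely non-strict — this is exactly where the hypothesis $a\neq b$ (equivalently $\sigma\neq\tau$) enters, since monotonicity on its own only delivers $f(\sigma)\subseteq f(\tau)$. It is also worth observing that no separate surjectivity assumption is required: we are really working with the corestricted map $\C\to f(\C)$, which is surjective onto its image by construction, so the statement is genuinely about the image code, in the same spirit as Proposition \ref{thmorcon}.
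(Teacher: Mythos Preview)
Your proof is correct and follows essentially the same approach as the paper's: pick preimages of two distinct elements of $f(\C)$, use that $\C$ is a chain, and apply monotonicity of morphisms (Proposition \ref{proptrunmomon}). If anything, your version is slightly more careful than the paper's, since you explicitly derive the \emph{strict} containment $a\subsetneq b$ from $a\subseteq b$ together with $a\neq b$, whereas the paper's proof asserts $f(\sigma)\subsetneq f(\tau)$ directly from monotonicity without making that step explicit.
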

\begin{proof}
Let $f:\C \to \D$ be a morphism and $\C$ be a complete code. To show that $f(\C)$ is a complete code. If $\vert f(\C) \vert =1$ then we have nothing to prove. Let $f(\sigma)$ and $f(\tau)$ be any two distinct codes  $f(\C)$. We will show that either $f(\sigma)\subsetneq f(\tau)$ or $f(\sigma)\supsetneq f(\tau)$.  Note that we have $\sigma,\tau\in \C$ and since $\C$ is a complete code we get that either $\sigma\subsetneq \tau$ or $\sigma\supsetneq \tau$. Moreover, as $f$ is a morphism we use the monotone property (Refer \ref{proptrunmomon}) to get that  either $f(\sigma)\subsetneq f(\tau)$ or $f(\sigma)\supsetneq f(\tau)$. Therefore we get $f(\C)$ as a complete code. Hence the result. 
\end{proof}
\begin{remark}
	Note that, for given any $m\in \N$, there exist a complete code $\C$ with cardinality $m$. One such  code is $\{\emptyset,1,12,\dots,12\cdots m-1\}$, which we will denote as $\CC{m}$. Given a complete code with cardinality $m$ we have seen via examples that it is either $\CC{m}$ or it can be reduced to $\CC{m}$ via some composition of morphisms. For example, $\C=\{12,123,1234\}$ is a complete code and is isomorphic to $\CC{3}=\{\emptyset,1,12\}$. So, we claim that it is true in general, i.e.,  any complete code with cardinality $m$ is  isomorphic to $\CC{m}$. Next result will also justify the notation $\CC{m}$ where $\mathscr{C}$ is for complete code and $m$ stands for cardinality of $\C$.  
\end{remark} 
\begin{theorem} \label{concomccg}
	Let $\C$ be a complete code on $n$ neurons. If $|\C|=m$ then $\C$ is isomorphic to  $\CC{m}= \{\emptyset,1,12,\dots,12\cdots m-1\}.$ \label{thcomiso} 
\end{theorem}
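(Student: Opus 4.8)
The plan is to exploit the fact that a complete code is totally ordered by inclusion. Since $\C$ is a complete code, the remark preceding the statement says that for every pair of codewords in $\C$ one is strictly contained in the other, so I can list $\C=\{c_1,c_2,\dots,c_m\}$ with $c_1\subsetneq c_2\subsetneq\cdots\subsetneq c_m$. I would then write down the evident bijection $f:\C\to\CC{m}$ sending $c_i$ to the $i$-th codeword $12\cdots(i-1)$ of $\CC{m}$ (so $c_1\mapsto\emptyset$ and $c_m\mapsto 12\cdots(m-1)$), and show that $f$ is an isomorphism of codes by verifying that both $f$ and $f^{-1}$ are morphisms. The degenerate case $m=1$ is immediate, so I would assume $m\ge 2$.

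To see that $f$ is a morphism I would invoke Jeffs' simple-trunk criterion (Proposition \ref{propsimtrun}): it is enough to show that $f^{-1}(\Tk{\CC{m}}{j})$ is a trunk in $\C$ for each neuron $j\in[m-1]$ of $\CC{m}$. Since the codeword $12\cdots(i-1)$ contains $j$ exactly when $i\ge j+1$, one gets $f^{-1}(\Tk{\CC{m}}{j})=\{c_{j+1},\dots,c_m\}$; and choosing any neuron $s\in c_{j+1}\setminus c_j$, which is nonempty precisely because the chain is strict, nestedness gives $\Tk{\C}{s}=\{c_{j+1},\dots,c_m\}$, so the preimage is in fact a simple trunk. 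For the inverse $g=f^{-1}$ I would similarly check that $g^{-1}(\Tk{\C}{i})$ is a trunk in $\CC{m}$ for each neuron $i$ of $\C$, splitting into three cases according to where $i$ sits in the chain: if $i$ lies in no codeword the preimage is empty; if $i$ lies in every codeword the preimage is all of $\CC{m}=\Tk{\CC{m}}{\emptyset}$; and if $i$ first appears in $c_\ell$ with $2\le\ell\le m$ then the preimage is $\{12\cdots(\ell-1),\dots,12\cdots(m-1)\}=\Tk{\CC{m}}{\ell-1}$ — a trunk in each case. This would complete the proof that $f$ is an isomorphism, hence $\C\cong\CC{m}$.

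I do not expect a serious obstacle: once the total-order picture is in place, the rest is bookkeeping with simple trunks. The one place that needs a little care is the three-way split above, where I want to keep clearly separate the trivial neurons of $\C$ — those contained in every codeword (absorbed into the $\emptyset$ of $\CC{m}$) and those contained in no codeword (discarded) — from the neurons that witness the jump between two consecutive codewords, and to record that strictness $c_j\subsetneq c_{j+1}$ is exactly what guarantees such a witnessing neuron exists for every layer.

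As an alternative I would keep in mind a route through Theorem \ref{thmnycc1}: one can realize $f$ as a composition of elementary code maps of types (1)--(3) — delete the neurons lying in $c_1$ (they are in every codeword, hence trivial), delete all but one representative from each layer $c_{j+1}\setminus c_j$ (the neurons in a layer lie in exactly the same codewords, hence are mutual duplicates), delete the neurons outside $c_m$ (in no codeword, hence trivial), and finally permute — and such a composition is a neural ring isomorphism by that theorem, which again yields $\C\cong\CC{m}$.
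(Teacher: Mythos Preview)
Your proposal is correct and follows essentially the same approach as the paper: order the codewords into a strict chain, send the $i$-th codeword to $12\cdots(i-1)$, and use Proposition~\ref{propsimtrun} on simple trunks to verify that both the map and its inverse are morphisms. Your write-up is in fact a bit more careful than the paper's --- you make explicit that strictness of $c_j\subsetneq c_{j+1}$ is what supplies a witnessing neuron, and you spell out the case split for $f^{-1}$ that the paper dismisses with ``similarly'' --- and your alternative route via elementary code maps (1)--(3) and Theorem~\ref{thmnycc1} is a nice add-on not in the paper.
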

\begin{proof}
 Let $\C=\{\sigma_1,\sigma_2,\dots\sigma_m\}$. Since $\C$ is a complete code, WLOG we can assume $\sigma_1\subseteq\sigma_2\subseteq \dots\subseteq\sigma_m.$ Define $f:\C \to \CC{m}$ such that $\sigma_i\mapsto 12\cdots i-1$. Note that $f(\sigma_1)=\emptyset$. Clearly, $f$ is a  well-defined bijective map.  We will show that both $f$ and $g=f^{-1}$ are morphisms. First we will note down simple trunks of both $\C$ and $\CC{m}$. The simple trunks of $\CC{m}$ are $ \Tk{\CC{m}}{i}=\{12\cdots i, 12\cdots ii+1,\dots, 12\cdots m-1\}$ for all $i\in[m-1]$. Before we write down the simple trunks of $\C $, note that given any $j\in[n]$ there exists a $k\in[m]$ such that $j\in\sigma_r$ for all $r\in[m]\backslash [k-1]$ and $j\notin \sigma_p$ for any $p\in[k-1]$. This comes from the fact that $\sigma_1\subseteq\sigma_2\subseteq \dots\subseteq\sigma_m$. So, simple trunks of $\C$ are $\Tk{\C}{j}=\{\sigma_{k},\sigma_{k+1},\dots,\sigma_m\}$ for all $j\in[n]$. 
 
 By Proposition \ref{propsimtrun} to show $f$ is a morphism, it is enough to show $f^{-1}(\Tk{\CC{m}}{i})$ is a trunk in $\C$ for all $i\in[m]$. Fix $i\in [m]$ and note that $f^{-1}(\Tk{\CC{m}}{i})=f^{-1}(\{12\dots i, 12\dots ii+1,\dots 12\dots m-1\}) =\{\sigma_{i+1},\sigma_{i+2},\dots\sigma_{m}\}=\Tk{\C}{l}$, where $l\in[n]$ such that $i+1$ is the least number such that $ l\in \sigma_{i+1}$. Therefore $f$ is a morphism.  Similarly one can show that $g=f^{-1}$ is also a morphism. Hence $f:\C\to \CC{m}$ is a isomorphism. 
 %Let $\vert \sigma_1\vert =k$, then $\vert\sigma_i \vert > k$ for all $i\in [m] \backslash \{1\}.  $ 
\end{proof}
\no  We will show that the complete codes are open convex in the next theorem. This gives another motivation for us to study complete codes.
\begin{theorem}
	Let $\C$ be a complete code then $\C$ is open convex. Moreover it has minimal open convex embedding dimension as 1. 
\end{theorem}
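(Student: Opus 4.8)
The plan is to write $\C$, which by completeness is a chain under inclusion, as $\sigma_1\subsetneq\sigma_2\subsetneq\cdots\subsetneq\sigma_m$, to exhibit an explicit open convex realization of it on the real line, and then to check that it cannot be realized in $\R^{0}$ once $m>1$. (By Theorem~\ref{thcomiso} one could equivalently just realize $\CC{m}$; the construction below specializes to that code.) We may assume every neuron occurs in $\sigma_m$, i.e. $\sigma_m=[n]$: a neuron lying in no codeword is trivial and can be discarded first, or one may simply assign it the empty region, which is open and convex. For each neuron $j$ set $t_j=\min\{i\in[m]:j\in\sigma_i\}$; since the $\sigma_i$ form a chain, $j\in\sigma_i$ if and only if $i\ge t_j$.

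Next I would fix real numbers $a_1>a_2>\cdots>a_{m-1}>0$, put $a_0=+\infty$ and $a_m=-\infty$, and define $\U=\{U_1,\dots,U_n\}\subseteq\R$ by $U_j=(-\infty,\,a_{t_j-1})$, so that $U_j=\R$ precisely when $j\in\sigma_1$. Each $U_j$ is an open convex subset of $\R$. The real line is the disjoint union of the open intervals $I_i=(a_i,a_{i-1})$ for $i=1,\dots,m$ together with the points $a_1,\dots,a_{m-1}$; a direct comparison of a point $x\in I_i$ with the threshold $a_{t_j-1}$ shows $x\in U_j\iff i\ge t_j\iff j\in\sigma_i$, so the codeword of every point of $I_i$ is exactly $\sigma_i$, and the codeword of each $a_k$ is $\sigma_k$. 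Moreover $\R$ is covered by the $U_j$ iff some neuron lies in $\sigma_1$ iff $\sigma_1\ne\emptyset$ iff $\emptyset\notin\C$, which is consistent with $I_1=(a_1,\infty)$ carrying the codeword $\sigma_1$. Hence $\C(\U)=\{\sigma_1,\dots,\sigma_m\}=\C$, so $\C$ is open convex with embedding dimension at most $1$; concretely, for $\C=\CC{m}$ one may take $U_j=(0,\,m-j)$ for $j\in[m-1]$.

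For the lower bound, suppose $m=|\C|>1$. In $\R^{0}$, which is a single point $p$, each region equals $\emptyset$ or $\{p\}$, and then the realized code consists of the single codeword $\{j:U_j=\{p\}\}$; since $|\C|>1$, $\C$ has no realization in $\R^{0}$ at all, so its minimal open convex embedding dimension equals $1$. (If $m=1$ then $\C=\{\sigma\}$ is realizable already in $\R^{0}$, so the dimension claim needs $|\C|>1$.) The only step with any friction is the verification $\C(\U)=\C$, and within it the mild case split according to whether $\emptyset\in\C$ and the corresponding appearance of $\R$ or $\emptyset$ as regions for neurons forced into, or absent from, every codeword; everything else is routine index bookkeeping.
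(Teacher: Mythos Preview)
Your argument is correct and shares the paper's core idea---realizing the chain by nested open intervals in $\R$---but the route is slightly different. The paper realizes only the canonical code $\CC{m}$ (via $U_i=(i,m)$), then invokes Theorem~\ref{thcomiso} together with Jeffs's result \cite[Theorem 1.3]{jeffs2020morphisms} that open convexity and minimal embedding dimension are isomorphism invariants to transfer the conclusion to an arbitrary complete code $\C$. You instead build a realization of $\C$ itself directly, using the threshold indices $t_j$ and the rays $U_j=(-\infty,a_{t_j-1})$, and supply the $\R^{0}$ lower bound by hand. Your approach is more self-contained (it avoids appealing to the isomorphism-invariance theorem), while the paper's is shorter once that machinery is in place; you also correctly flag that the ``dimension $1$'' statement should be read with the caveat $|\C|>1$, a point the paper leaves implicit.
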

\begin{proof}
	%Next, when $m=2$ we have $\C_2=\{\emptyset,1\}$. So, consider $\U_2=\{U_1\}$, where $U_1 \subsetneq \R$ is any non-empty open convex set of $\R$. Then $\U_2$ is an open convex realization of $\C_2$ with $X=\R$. 
	For $m=1$, $\CC{1}=\{\emptyset\}.$ Trivially we consider $\U=\emptyset$ with $X=\R$ as the realization of $\CC{1}$. Next, for any $m\in\N\backslash\{1\}$, let  $\U_m=\{U_1,\dots U_{m-1}\}$ with $U_i$ as the open interval $(i,m)\subseteq \R$. Then we have $\U_m$ to be an open convex realization of $\CC{m}$ with $X=\R. $ Therefore $\CC{m}$ is an open convex code with minimal open convex embedding dimension as 1. 
	
	Jeffs in \cite[Theorem 1.3]{jeffs2020morphisms} showed that convexity and minimal embedding dimension
	are isomorphism invariants. So, for any complete code $\C$ with cardinality $m$,  Theorem \ref{thcomiso} tells us that $\C$ is isomorphic to $\CC{m}$.  Further as $\CC{m}$ is open convex, using Theorem 1.3 of \cite{jeffs2020morphisms} we get that $\C$ is an open convex code with minimal convex embedding dimension 1. 
\end{proof}
%\subsection{Codeword contained trees}
%explain few examples of trees. The directed ones. 
%Show there is an tree isomorphismssudo  but the codes dosent preserves NRH.  
\subsection{2-Regular CCG}
 Recall that a graph $G$ is said to be 2-regular if degree of every vertex is exactly 2. We will call a code $\C$ as \textit{2-regular code} if its CCG is a 2-regular graph. In this part of the section we will discuss which codes can never have a  2-regular codeword containment graph. Before that let us see few examples of 2-regular codes below. Note that the complete graph on 3 vertices descried above in Figure \ref{comccg3} is 2-regular. 
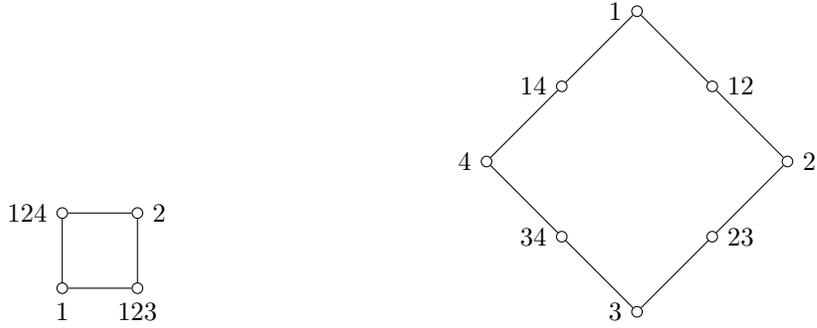
\begin{figure}[h!]
	\centering
	\begin{subfigure}[b]{0.5\linewidth}
		\centering
		\begin{tikzpicture}  
			\vertex (1) at (0,0) [label=below:1] {};
			\vertex (123) at (1,0) [label=below:123]{};
			\vertex (emp) at (0,1) [label=left:$124$]{};
			\vertex (1234) at (1,1) [label=right:2]{};
			\path[-]
			(1) edge (emp)
			(1) edge (123)
			
			(123) edge (1234)
			
			(emp) edge (1234)
			;
		\end{tikzpicture}
		\caption{$G_\C$ when $\C=\{1,2,123,1234\}$} \label{figreg4}
	\end{subfigure}
	\begin{subfigure}[b]{0.4\linewidth}
		\centering 
		\begin{tikzpicture}  
			\vertex (2) at (2,0) [label=right:$2$] {};
			\vertex (1) at (0,2) [label=left:1]{};
			\vertex (12) at (1,1) [label=right:12]{};
			\vertex (23) at (1,-1) [label=right:23]{};
	
			\vertex (34) at (-1,-1) [label=left:34]{};
			\vertex (3) at (0,-2) [label=left:3]{};
			\vertex (14) at (-1,1) [label=left:14]{};
			\vertex (4) at (-2,0) [label=left:4]{};
			\path[-]
			
			(2) edge (12)
			
			(1) edge (12)
			(1) edge (14)
			(4) edge (14)
			(3) edge (23)
			(3) edge (34)
			(4) edge (34)
			(2) edge (23)
			;
		\end{tikzpicture}
		\caption{$G_{\C}$ when  $\C=\{1,2,3,4,12,23,34,14\}$}
	\end{subfigure}
	\caption{2-regular codeword containment graph}
	\label{conccg}
\end{figure}
	\begin{proposition}
	Let $\C$ be a code on $n$ neurons with $\vert \C\vert = m>3$. If $G_\C$ is connected 2-regular then $m$ is even. 
\end{proposition}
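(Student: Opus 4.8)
The plan is to analyze the structure that a connected $2$-regular graph must have, and then show that the codeword-containment relation forces an even number of vertices. A connected $2$-regular graph on $m$ vertices is precisely a cycle $C_m$, so $G_\C$ is a single cycle $\sigma_1 \sigma_2 \cdots \sigma_m \sigma_1$, where consecutive codewords are comparable under inclusion (one strictly contains the other) and non-consecutive ones are incomparable. First I would fix such a cyclic ordering and record, for each edge $(\sigma_i,\sigma_{i+1})$, whether it ``goes up'' ($\sigma_i \subsetneq \sigma_{i+1}$) or ``goes down'' ($\sigma_i \supsetneq \sigma_{i+1}$). The key claim is that the direction must alternate: no two consecutive edges can both go up, and no two can both go down. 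Indeed, if $\sigma_{i-1} \subsetneq \sigma_i \subsetneq \sigma_{i+1}$ then $\sigma_{i-1} \subsetneq \sigma_{i+1}$, so $\sigma_{i-1}$ and $\sigma_{i+1}$ would be adjacent in $G_\C$; but in a cycle $C_m$ with $m > 3$ the vertices $\sigma_{i-1}$ and $\sigma_{i+1}$ are at distance $2$ and hence non-adjacent, a contradiction. The symmetric argument (using transitivity of $\supsetneq$) rules out two consecutive ``down'' edges.

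Once alternation is established, the conclusion is immediate: traversing the cycle, the edge directions form a pattern up, down, up, down, $\dots$ around a closed walk of length $m$, and a $2$-coloring of the edges of a cycle in which adjacent edges receive different colors exists if and only if $m$ is even. Equivalently, each vertex $\sigma_i$ is either a local minimum (both incident edges go ``out'' to larger codewords) or a local maximum (both incident edges come ``in'' from larger codewords) of the containment order restricted to the cycle, these two types must alternate around the cycle, and an alternating $2$-coloring of a cycle's vertices forces even length. Hence $m$ is even.

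I would present the write-up in this order: (1) invoke the elementary graph fact that a connected $2$-regular graph is a cycle $C_m$, and note $m > 3$ is needed so that $C_m$ is not a triangle (in which case every pair of vertices is adjacent and the alternation argument breaks); (2) prove the alternation lemma for edge directions using transitivity of strict containment together with the non-adjacency of distance-$2$ vertices in $C_m$; (3) conclude that an alternating assignment around a cycle forces even length. The main obstacle — really the only subtle point — is step (2), and specifically making sure the hypothesis $m > 3$ is used correctly: for $m = 3$ the graph $C_3 = K_3$ is both connected and $2$-regular (see $\C = \{1,12,123\}$ in Figure \ref{comccg3}), yet $3$ is odd, so the argument must and does break there precisely because $\sigma_{i-1}$ and $\sigma_{i+1}$ \emph{are} adjacent in a triangle. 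I would state this explicitly so the role of the hypothesis is transparent.
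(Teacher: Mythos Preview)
Your proposal is correct and follows essentially the same approach as the paper: both arguments identify $G_\C$ as a cycle, establish that the containment direction must alternate along consecutive edges (the paper asserts this via a ``without loss of generality'' relabelling, while you prove it explicitly using transitivity and the non-adjacency of distance-$2$ vertices when $m>3$), and then conclude that an alternating pattern around a closed cycle forces even length. Your write-up is more detailed in justifying the alternation step, but the underlying idea is the same.
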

\begin{proof}
	Let $\C$ be a code on $n$ neurons with $\vert\C\vert=m>3$. Let $\C=\{\sigma_1,\sigma_2,\sigma_3,\dots,\sigma_m\}$. Assume $m$ to be odd. Let $G_\C$ be a connected 2-regular graph.  So, for any $i,j\in[n]$ we have that $\sigma_i$ and $\sigma_j$ have a path between them. Further for any $i\in[n]$, $\sigma_i$ has exactly degree 2. Therefore, without loss of generality we can relabel the codewords of $\C$ to get $\sigma_1\subsetneq \sigma_2\supsetneq \sigma_3\subsetneq \sigma_4\cdots \subsetneq\sigma_{m-1}\supsetneq\sigma_m$. Further since the graph is 2-regular we either have $\sigma_m\supsetneq\sigma_1$ or $\sigma_m\subsetneq\sigma_1$. However, this implies   $\sigma_1\subsetneq \sigma_{m-1}$ or $\sigma_m\subsetneq \sigma_2$, respectively. This leads to a contradiction to the fact that the  graph $G_\C$ is 2-regular. Hence $m$ must be even.  \end{proof}
	
\begin{remark}
    For $k\in\N \setminus \{1\}$ we note that there always exists at least one code $\C$ with $\vert \C \vert = 2k $ such that its CCG is a 2-regular graph. We give these code for all such $k$'s. When $k=2$, the code $\{1,2,123,1234\}$ is a 2 regular code with cardinality $2k=4$ (Refer Figure \ref{figreg4}). Next, for all $k\in \N \setminus \{1,2\}$ we can check that the codes $ \CR{k}:=\{1,2,\dots, k, 12,23,\dots, k-1k,k1\}$ has its CCG as 2-regular. Furthermore, there exists even cardinality codes with its CCG as 2-regular and not isomorphic to $\CR{k}$ for some $k$. We will discuss this further in Remark \ref{remcr6}, for which we will use the fact that  $\CR{k}$ is both open and closed convex. We will prove this in the following proposition.

%Furthermore, we give an example that there are 2-regular codes that  that given any 2-regular code with even cardinality greater than four is isomorphic to such code.  We end this section with the following the example. 
	\end{remark}
	\begin{proposition}
	Let $k\in\N\backslash\{1,2\}$ then $\CR{k}$ is both open and closed convex. Moreover, we show that the minimal open and closed convex embedding dimension is 2.  \label{propclosedcr}
	\end{proposition}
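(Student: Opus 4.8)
The plan is to establish the two assertions separately: first exhibit an explicit realization of $\CR{k}$ by convex open sets, and then by convex closed sets, in $\R^2$; then show that no realization of $\CR{k}$ by convex sets (open or closed) exists in $\R^1$.

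\emph{Realization in $\R^2$.} I would place the $k$ neurons at the vertices $p_1,\dots,p_k$ of a regular $k$-gon of circumradius $R$, so that consecutive vertices $p_i,p_{i+1}$ are at distance $2R\sin(\pi/k)$ and every non-consecutive pair of vertices is at distance at least $2R\sin(2\pi/k)$, and take $U_i$ to be the open disk of radius $r$ centred at $p_i$. For $k\ge 5$ choose $r$ with $R\sin(\pi/k)<r<R\sin(2\pi/k)$, a non-empty interval since $\sin(2\pi/k)=2\sin(\pi/k)\cos(\pi/k)>\sin(\pi/k)$; for $k=3,4$ choose $r$ in the explicit non-empty interval produced by the same two requirements (for $k=3$ the vertex-to-centroid distance $R$ plays the role of the non-consecutive distance). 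Then I would verify $\C(\{U_i\})=\CR{k}$ by inspecting atoms: (i) a small neighbourhood of each $p_i$ meets no $U_j$ with $j\ne i$, so every singleton is a codeword; (ii) $U_i\cap U_j\ne\emptyset$ exactly when $\operatorname{dist}(p_i,p_j)<2r$, i.e. exactly when $\{i,j\}$ is an edge of the $k$-cycle, and for such a consecutive pair the lens $U_i\cap U_{i+1}$ is disjoint from all other disks and hence equals the atom of $\{i,i+1\}$; (iii) every triple intersection is empty, since a triple containing a non-consecutive pair dies by (ii), while three \emph{consecutive} disks have empty common intersection precisely because $r$ is less than the radius of the smallest enclosing circle of $\{p_{i-1},p_i,p_{i+1}\}$ (which equals $R\sin(2\pi/k)$ once that triangle is obtuse, i.e. for $k\ge 5$, and is checked directly for $k=3,4$). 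This yields $\C(\{U_i\})=\CR{k}$ up to the empty codeword, which is the standard convention for codes realized by bounded sets.

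Because all of the inequalities above are strict, the closed disks $\overline{U_i}$ of the same radius $r$ produce exactly the same code (consecutive closed disks still overlap in a set with interior, non-consecutive ones stay disjoint, three consecutive ones still miss a common point), so $\CR{k}$ is closed convex as well. Hence the minimal open and closed convex embedding dimension of $\CR{k}$ is at most $2$.

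\emph{No realization in $\R^1$.} I would argue that $\CR{k}$ is not realizable by convex sets on the line, i.e. by intervals. If $\mathcal{U}=\{I_1,\dots,I_k\}$ were intervals (open or closed) with $\C(\mathcal{U})=\CR{k}$, then $I_i\cap I_j\ne\emptyset$ iff $\{i,j\}$ is a codeword of $\CR{k}$ (codewords have size $\le 2$), i.e. iff $\{i,j\}$ is an edge of the cycle $C_k$; so the intersection graph of $\mathcal{U}$ is exactly $C_k$. For $k\ge 4$ this is impossible, since intersection graphs of intervals are chordal while $C_k$ is a chordless cycle of length $\ge 4$. For $k=3$ the intervals $I_1,I_2,I_3$ pairwise intersect, so by Helly's theorem on the line they have a common point, forcing $\{1,2,3\}\in\C(\mathcal{U})$, contradicting $\{1,2,3\}\notin\CR{3}$. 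Thus no $1$-dimensional realization exists, and together with the previous paragraphs the minimal open (resp. closed) convex embedding dimension of $\CR{k}$ is exactly $2$.

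The main obstacle is the bookkeeping in step (iii): one must pin down the admissible window of radii and confirm it is non-empty for \emph{every} $k$ — for $k=3,4$ the thresholds for ``no triple overlap'' and ``non-consecutive disjoint'' nearly coincide, so those two small cases are checked by hand — and then confirm that no codeword of size $\ge 3$ and no non-edge pair survives. Once the open case is done with strict inequalities the closed case is immediate, and the one-dimensional obstruction is short.
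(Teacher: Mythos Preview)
Your argument is correct but follows a different route from the paper. For the realizations, the paper takes $U_i$ to be the closed edge of a regular $k$-gon joining vertex $i$ to vertex $i+1$ (so consecutive $U_i$'s meet exactly in a shared endpoint) for the closed case, and a cyclic chain of thin open rectangles for the open case, in both instances with $X=\bigcup_i U_i$; no radius has to be tuned and the verification that $\C(\U)=\CR{k}$ is immediate. Your disk construction requires more bookkeeping (the admissible window of radii must be pinned down, with $k=3,4$ handled separately), but it has the virtue of treating the open and closed cases uniformly with a single family of sets. For the one-dimensional obstruction the paper simply invokes an external proposition, whereas your self-contained argument via chordality of interval graphs for $k\ge 4$ and Helly on the line for $k=3$ makes the proof independent of outside references. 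One small point: rather than appealing to a ``standard convention'' about the empty codeword, it is cleaner---and matches the paper---to take the stimulus space to be $X=\bigcup_i U_i$, which removes $\emptyset$ from the code directly.
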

	\begin{proof}
	Fix $k\in\N\backslash\{1,2\}$. We will first construct a closed convex realization of $\CR{k}$. Consider $k$ points in $\R^2$  and label them as $\{1,\dots,k\}.$ Further, we make sure that the distance between $i$ and $i+1\mod k$ is equal for all $i$. Next, draw a $k$-gon using these points as vertices. Now, consider $U_i$ to be the edge from $i$ to $i+1\mod k$. We claim that $\U=\{U_1,\dots,U_k\}$ is a closed convex realization of $\CR{k}$ with $X=\bigcup_{i\in[k]} U_i$. Clearly, $U_i$'s are non empty closed convex sets in $\R^2$. Observe that $U_i\cap U_j\not =\emptyset$ iff $i-j \cong 1 \mod
	 k$. Therefore we have that $\U$ is a closed convex realization of $\CR{k}$ in $\R^2$. Further, using Proposition 2.2 of  \cite{suhith2021few} we see that $\CR{k}$ cannot have a convex realization in $\R$. Hence the minimal closed convex embedding dimension is 2. 
	 
	 The open convex realization of $\CR{k}$ is given in Figure \ref{figrealcrk}. Note that this is a realization in $\R^2$ with $X=\bigcup_{i\in[k]} U_i$. Once again using Proposition 2.2 of  \cite{suhith2021few}, we get that the  minimal open convex embedding dimension is 2. 
	 \begin{figure}
	 	\centering
	 	\begin{tikzpicture}[scale=0.5]
	 		\draw[dashed] (0,0) rectangle  (3,1);
	 		\draw[dashed,rotate=35] (2,-1.5) rectangle  (5,-0.5);
	 		\draw[dashed,rotate=120] (0,-1) rectangle  (3,0);
	 			\draw[dashed] (3.8,1.5) rectangle  (4.8,4.5);
	 			\draw(4,5.3) node {$\ddots$};
	 			\draw[dashed] (3.8,1.5) rectangle  (4.8,4.5);
	 			\draw(3.3,6.5) node[left] {$\ddots$};
	 			\draw(0.5,6.5) node[right] {$\rddots$};
	 			\draw(-0.8,5.5) node[right] {$\rddots$};
	 			\draw(-1.5,4) node[right] {$\vdots$};
	 			\draw(0.7,0.5) node[right] {$\small U_1$};
	 			\draw(3,1.3) node[right] {$\small U_2$};
	 			\draw(3.7,3.3) node[right] {$\small U_3$};
	 			\draw(-1.1,2) node[right] {$\small U_k$};
	 	\end{tikzpicture}
	 	\caption{Open convex realization of $\CR{k}$} \label{figrealcrk}
	 \end{figure}
	\end{proof}
%\begin{figure}
%		\begin{subfigure}[b]{0.5\linewidth}
%			\begin{tikzpicture}[scale=0.5]
%				\draw (1,0) ellipse (3cm and 0.7cm);
%				\draw[dashed] (2,1) ellipse (0.7cm and 3cm);
%				\draw (0,1) ellipse (0.7cm and 3cm);
%				\draw[rotate=45] (3,2) ellipse (2cm and 0.7cm);
%			\end{tikzpicture}
%		\end{subfigure}
%\end{figure}
\begin{remark} \label{remcr6}
	Note that there are $2$ regular codes which may not be isomorphic to $\CR{k}$. We discuss one such example. Let $k=6$, consider $\CR{6}=\{1,2,3,4,5,6,12,23,34,45,56,16\}$. We know that $ \CR{6}$ is closed convex with a realization that can be obtained as discussed in the proof of Proposition \ref{propclosedcr}. 
Further, consider the code $\C=\{ 12, 16, 56, 45, 34, 23,123, 126, 156, 456, 345, 234\}$. One can check that $\C$ is 2-regular code. However, $\C$ is \textit{not} a closed convex code (Refer \cite[Lemma 2.9]{cruz2019open}). So, if there is an isomorphism from $\C$ to $\CR{6}$ then it violates Theorem 1.3 of \cite{jeffs2020morphisms} (isomorphisms preserve closed convexity). Hence, $\C$ cannot be isomorphic to $\CR{6}$. 
\end{remark}
 
Next we will work with the general relationship graph $G(\C)$ corresponding to a neural code $\C$. This graph is defined using the canonical form of a neural ideal of $\C$. We will first understand the definitions of neural ideal and its canonical form in the next section. Then in section \ref{sec: grg} we finally define general relationship graphs for any code $\C$ and look into some of its properties. 
\section{Canonical forms and their behavior under elementary code maps } \label{sec:cf}

In this section we define pseudo-monomial ideals and their canonical forms. Then for any given neural code $\C$, we define a neural ideal, denoted by $\J_\C$. We will also give a detailed explanation of algorithm to generate canonical form of neural ideals which was described by Nora Youngs \cite{youngs2014neural}.  Lastly, we will discuss the behaviour of canonical forms under elementary code maps. 
\subsection{Neural ideal and its canonical form}
\begin{definition}\cite{curto2013neural} We discuss the definitions pseudo-monomials, pseudo-monomial ideal and minimal pseudo-monomial.	\begin{enumerate}
		\item If $ f\in \mathbb{F}_2[x_1,\dots,x_n] $ has the form $ f=\prod_{i\in\sigma}x_i\prod_{j\in\tau} x(1-x_j) $ for some $ \sigma,\tau\subseteq [n] $ with $ \sigma \cap \tau=\emptyset, $ then we say that $ f $ is a \textit{pseudo-monomial}.
		\item An ideal $ J\subset \mathbb{F}_2[x_1,\dots,x_n]  $ is a pseudo-monomial ideal if $ J $ can be generated by a finite set of pseudo-monomials.
		\item Let $ J\subset \mathbb{F}_2[x_1,\dots,x_n]$ be an ideal, and  $ f\in J $ a pseudo-monomial. We say that $ f $ is a minimal pseudo-monomial of $ J $ if there does not exist another pseudo-monomial $ g\in J $ with $ deg(g)< deg(f) $ such that $ f=hg $ for some $ h\in  \mathbb{F}_2[x_1,\dots,x_n]. $
\end{enumerate}\end{definition}
\begin{definition}[Canonical form] \cite{curto2013neural} A pseudo-monomial ideal, $ J $ is said to be in canonical form if $ J =\langle f_1,\dots,f_l\rangle,$ where the set $ \operatorname{CF}(J)=\{f_1,\dots,f_l\} $ is the set of all minimal pseudo-monomials of $ J $. We refer to $ \operatorname{CF}(J) $ as the canonical form of $ J. $
\end{definition}

\no Let $ v\subseteq [n] $ then define $ \rho_v $ a pseudo-monomial in $  \mathbb{F}_2[x_1,\dots,x_n] $ as $ \rho_v= \prod_{i\in v}x_i\prod_{j\notin v} x(1-x_j)$. Now we define neural ideal for a given code $\C$. 

\begin{definition}[Neural ideal]
	Let $ \C $ be a neural code on $ n $ neurons then the ideal $ \mathcal{J}_\C\subseteq \mathbb{F}_2[x_1,\dots,x_n]$ generated by all the pseudo-monomials $ \rho_v  $, for $ v\notin \C $ is called the neural ideal, i.e. $ \mathcal{J}_\C=\ideal{\{\rho_v \mid v\notin \C\}} $.
\end{definition}
\begin{eg}
	Let $ \C=\{12,23,\emptyset\} $ then $ \J_\C=\langle\rho_{1},\rho_{13},\rho_{123},\rho_{2},\rho_{3}\rangle  $. We compute $ \CF{\C} $ using sage math by using the algorithm given by Petersen et al. \cite{petersen2018neural} and we obtain $ \CF{\C}= \{x_1(1-x_2), x_1x_3,(1-x_2)x_3,x_2(1-x_3)(1-x_1)\}. $
\end{eg}
\no Next, we discuss an algorithm to generate canonical form of neural ideals given by Youngs \cite{youngs2014neural}.
\subsection{Algorithm to generate canonical form of neural ideals}\label{youngsalgo}
This algorithm can be found in Nora Youngs thesis \cite{youngs2014neural}. Also, S Magallanes has given a detailed explanation of the same in her academic report    \cite{magallanes2019neural}.  Let $\C$ be a code on $n $ neurons and  $ \C=\{c_1,c_2\dots, c_m\} $. Define the \textit{binary form} for $ c_i $ as the vector $ c_{i1}^{}c_{i2}^{}\dots c_{in}^{}\in\{0,1\}^n $ such that $ c_{ij}=1 $ if and only if $ j\in c_i.$ The following are the detailed steps to obtain $\CF{\C}.$
 \begin{enumerate}[Step 1:]
	\item For all $ i\in[m] $ define $ P_{c_i} $ to be the ideal of $ \mathbb{F}_2[x_1,x_2\dots,x_n] $ generated by $ \{x_1-c_{i1}^{},x_2-c_{i2}^{},\dots, x_n-c_{in}^{}\} $, i.e., $ P_{c_i}= \ideal{x_1-c_{i1}^{},x_2-c_{i2}^{},\dots, x_n-c_{in}^{}}. $ Note that in $ \mathbb{F}_2[x_1,x_2\dots,x_n] $ we have $ x_j-1=1-x_j $ and we will replace the same everywhere. 
	%	\item Notice that there are $ n $ generators for each ideal and in total there are $ m $ ideals. We now denote $ GM(\J_\C) $ to be a $ m\times n $ matrix with $ ij^{\text{th}} $ entry of this matrix being the $ j^{\text{th}} $ generator of the $ i{\text{th}} $ ideal $ P_{c_i} $ i.e., $ (GM((\J_\C)))_{ij}=x_j-c_{ij}. $
	\item We form the generating set for the product of these ideals ($ P_{c_i} $). Define $$ M(\nei{\C}) =\left\{\prod_{i=1}^{m}g_i\mid g_i \text{ is  a generator of the ideal } P_{c_i}\right\}. $$ We will write this definition more technically. First denote, $ F(X,Y) $ to be the set of all functions from $ X $ to $ Y. $ We get 
	$ M(\nei{\C})=\left\{\prod_{i=1}^{m}(x_{\tau(i)}^{}-c_{i\tau(i)}^{})\mid \tau \in F([m],[n])\right\}. $
	\item Impose the relation $ x_i(1-x_i)=0 $. So, $ x_i^2=x_i $ and $ (1-x_i)^2=(1-x_i). $ Further remove all the redundant elements. As 0 doesn't encode any useful information, we remove it too. Call the new set obtained as $ \widehat{M}(\nei{\C}). $
	\item Remove each pseudo-monomial in $ \widehat{M}(\nei{\C}) $ that is a multiple of another pseudo-monomial in the set. We denote this set to be $ \widetilde{M}(\nei{\C}). $

\end{enumerate}
\no Then $ \widetilde{M}(\nei{\C}) =\CF{\C}.$ For further details please refer to  \cite[Proposition 1]{youngs2014neural}.

\no In the next section we will consider $q:\C\to\D$ to be a elementary code map as mentioned in Theorem \ref{thmnycc1} and provide the relationship between $\CF{\C}$ and $\CF{\D}$. 

\subsection{Behaviour of canonical forms under elementary code maps}
Given two codes $\C$ and $\D$ on $n$ and $m$ neurons respectively, it now becomes important to understand the relation between $\J_\C$ and $\J_\D$. Jeffs, Omar and Youngs \cite{jeffs2018homomorphisms} look into polynomial ring homomorphisms between $\mathbb{F}_2[x_1,\dots,x_n] \to \mathbb{F}_2[y_1,\dots,y_m] $ that preserve neural ideals. They demonstrate how all homomorphisms of this kind can be divided into three basic types. These morphisms were permutation, bit flip and a restriction map. In our paper we work with neural ring homomorphisms between $\ring{\D}\to \ring{\C}$ and give the relation between $\CF{\D}$ and $\CF{\C}.$  We will use this algorithm discussed in section \ref{youngsalgo} to prove our results.
\begin{theorem}\label{thrperm}
	Let $\C,\D$ be two codes on $n$ neurons with $q:\C\to \D$ being a permutation map, via the permutation $\gamma\in S_n$. Then $\CF{\D}=\gamma(\CF{\C}),$ i.e., $\CF{\D}=\{x_{\gamma(\sigma) }\mid x_\sigma\in \CF{\C}\}.$
\end{theorem}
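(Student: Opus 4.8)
The plan is to realize the permutation code map at the level of polynomial rings by the variable-relabeling automorphism and then transport canonical forms along it. Concretely, define the ring automorphism $\Phi_\gamma:\mathbb{F}_2[x_1,\dots,x_n]\to\mathbb{F}_2[x_1,\dots,x_n]$ by $x_i\mapsto x_{\gamma(i)}$ (and $1\mapsto 1$); it is bijective with inverse $\Phi_{\gamma^{-1}}$. This $\Phi_\gamma$ is exactly the operation written as $\gamma(-)$ in the statement: it sends a pseudo-monomial $f=\prod_{i\in\sigma}x_i\prod_{j\in\tau}(1-x_j)$ to $\prod_{i\in\sigma}x_{\gamma(i)}\prod_{j\in\tau}(1-x_{\gamma(j)})$, i.e. to the same pseudo-monomial data reindexed by $\gamma$.

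First I would record three elementary closure properties of $\Phi_\gamma$: (i) it carries pseudo-monomials to pseudo-monomials bijectively, since $\gamma$ is a bijection on $[n]$ and so the disjointness condition $\sigma\cap\tau=\emptyset$ is preserved; (ii) it preserves degree; and (iii) being a ring isomorphism, $f=hg$ if and only if $\Phi_\gamma(f)=\Phi_\gamma(h)\,\Phi_\gamma(g)$, so it preserves the divisibility relation among pseudo-monomials. Consequently $\Phi_\gamma$ sends the minimal pseudo-monomials of any pseudo-monomial ideal $J$ bijectively onto the minimal pseudo-monomials of $\Phi_\gamma(J)$.

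Next I would show $\Phi_\gamma(\J_\C)=\J_\D$. Since $q$ is the permutation map via $\gamma$ we have $\D=\{\gamma(c)\mid c\in\C\}$, so $v\notin\C$ if and only if $\gamma(v)\notin\D$. On generators, $\Phi_\gamma(\rho_v)=\rho_{\gamma(v)}$ (reindexing the two products by $\gamma$), hence $\Phi_\gamma(\J_\C)=\langle \rho_{\gamma(v)}\mid v\notin\C\rangle=\langle\rho_w\mid w\notin\D\rangle=\J_\D$. Combining this with the previous paragraph, $\Phi_\gamma$ restricts to a bijection from $\CF{\C}$, the set of minimal pseudo-monomials of $\J_\C$, onto the set of minimal pseudo-monomials of $\J_\D$, which is $\CF{\D}$. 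Therefore $\CF{\D}=\gamma(\CF{\C})$, as claimed.

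An alternative route, more in the spirit of Section~\ref{youngsalgo}, is to run Youngs' algorithm in parallel for $\C$ and $\D$: using $(\gamma(c_i))_j=c_{i,\gamma^{-1}(j)}$ one checks $P_{\gamma(c_i)}=\Phi_\gamma(P_{c_i})$, hence $M(\J_\D)=\Phi_\gamma(M(\J_\C))$ because $\Phi_\gamma$ respects products, and then that Steps~3--4 (imposing $x_i(1-x_i)=0$, deleting redundancies and multiples) commute with the automorphism $\Phi_\gamma$, giving $\widetilde{M}(\J_\D)=\Phi_\gamma(\widetilde{M}(\J_\C))$. The only real work in either approach is bookkeeping: in the first, verifying that $\Phi_\gamma$ respects the ``minimal pseudo-monomial'' conditions (degree and divisibility); in the second, tracking the index relabeling $j\mapsto\gamma^{-1}(j)$ cleanly through the binary forms and the product step. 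I expect this bookkeeping, rather than anything deeper, to be the main point to get right.
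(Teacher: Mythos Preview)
Your proposal is correct. The paper does not supply a self-contained argument here; it simply appeals to the proof of Theorem~2.18 in \cite{jeffs2018homomorphisms}, so your write-up actually provides more detail than the paper itself. Your first route (the variable-relabeling automorphism $\Phi_\gamma$, transporting $\J_\C$ to $\J_\D$ and preserving the minimal-pseudo-monomial conditions) is the natural argument and is essentially what that cited result encodes. Your alternative via Youngs' algorithm is also fine and is more in the spirit of how the paper handles the remaining elementary code maps (Theorems~\ref{thraddon}--\ref{thrproj}); either approach suffices, and your assessment that the only nontrivial content is index bookkeeping is accurate.
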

\no The proof of this above theorem comes from the proof of Theorem 2.18 of \cite{jeffs2018homomorphisms}. 
\begin{theorem} \label{thraddon}
	Let $ \C, \D $ be two codes on $ n, n+1 $ neurons respectively with $ q:\C\to \D $ being a adding trivial on neuron. Then $ \CF{\D}=\CF{\C}\cup\{1-x_{n+1}\}. $
\end{theorem}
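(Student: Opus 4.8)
The plan is to use Youngs' algorithm from Section \ref{youngsalgo} to compute $\CF{\D}$ directly and compare it with $\CF{\C}$. First I would set up notation: write $\C=\{c_1,\dots,c_m\}$ as a code on $n$ neurons, and since $q:\C\to\D$ adds a trivial neuron (say the $(n+1)$-st, which by definition fires on no codeword), the code $\D$ on $n+1$ neurons is $\{c_1,\dots,c_m\}$ where each $c_i$ is regarded as a subset of $[n+1]$ not containing $n+1$. Thus the binary form of each $c_i$ in $\D$ is $c_{i1}\cdots c_{in}0$, i.e., the old binary form with a $0$ appended.

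Next I would run the algorithm on $\D$. In Step 1, the ideal $P_{c_i}$ for $\D$ is generated by $\{x_1-c_{i1},\dots,x_n-c_{in},\,x_{n+1}-0\}=\{x_1-c_{i1},\dots,x_n-c_{in},\,x_{n+1}\}$, which is exactly the generating set of $P_{c_i}$ for $\C$ together with the one extra generator $x_{n+1}$. In Step 2, forming $M(\nei{\D})$ means choosing one generator from each $P_{c_i}$; the key observation is that the choices split into two kinds: either the choice function $\tau$ never selects the generator $x_{n+1}$ from any $P_{c_i}$ — giving exactly the elements of $M(\nei{\C})$ — or $\tau$ selects $x_{n+1}$ from at least one $P_{c_i}$, in which case the product is divisible by $x_{n+1}$ and, after Step 3's idempotent reduction $x_{n+1}^2=x_{n+1}$, equals $x_{n+1}$ times an element that is (up to that factor) a product of generators from the remaining ideals. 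I would argue that every such product is a multiple of the pseudo-monomial $x_{n+1}$ itself (taking the choice that picks $x_{n+1}$ from \emph{every} $P_{c_i}$ shows $x_{n+1}\in M(\nei{\D})$ after reduction, since $x_{n+1}^m \to x_{n+1}$). Wait — I should be careful: $x_{n+1}$ is a pseudo-monomial of the form $\rho$-type only if it genuinely lies in $\J_\D$; indeed $x_{n+1}$ vanishes on all of $\D$ since no codeword contains $n+1$, so it does. Hence in Step 4, every element of $\widehat M(\nei{\D})$ that is divisible by $x_{n+1}$ gets eliminated as a multiple of $x_{n+1}$, \emph{except} $x_{n+1}=1-x_{n+1}$ itself (recall $x_{n+1}-1=1-x_{n+1}$ in $\mathbb F_2$; but here the generator is $x_{n+1}-0=x_{n+1}$, so the surviving minimal pseudo-monomial is $x_{n+1}$, which the theorem statement writes as... let me reconcile). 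Actually the statement claims $\CF{\D}=\CF{\C}\cup\{1-x_{n+1}\}$; this matches the convention that "trivial neuron" means a neuron that fires on \emph{every} codeword, so the extra generator is $x_{n+1}-1=1-x_{n+1}$ — I would adopt whichever convention Theorem \ref{thmnycc1} uses and carry the corresponding sign through. Either way the structural argument is identical.

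Finally I would check the two containments. For $\CF{\C}\cup\{1-x_{n+1}\}\subseteq\CF{\D}$: the element $1-x_{n+1}$ survives all reduction steps as shown; and each $f\in\CF{\C}$ appears in $\widehat M(\nei{\D})$ (via a $\tau$ avoiding the $x_{n+1}$ generator), involves no $x_{n+1}$, and cannot be a multiple of $1-x_{n+1}$ nor of any other element of $\widetilde M(\nei{\D})$ since it was already minimal in $\widetilde M(\nei{\C})$ and no \emph{new} lower-degree pseudo-monomials without $x_{n+1}$ can appear. For the reverse containment: any surviving element either involves $x_{n+1}$, in which case minimality forces it to be exactly $1-x_{n+1}$ (it must be divisible by the degree-one pseudo-monomial $1-x_{n+1}\in\J_\D$, hence equal to it by minimality), or it does not involve $x_{n+1}$, in which case it already lay in $\widehat M(\nei{\C})$ and minimality in the larger set implies minimality in the smaller, so it belongs to $\CF{\C}$.

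The main obstacle I anticipate is Step 3/Step 4 bookkeeping: precisely showing that after imposing $x_{n+1}^2=x_{n+1}$ the only minimal survivor among the $x_{n+1}$-divisible products is the degree-one pseudo-monomial itself, and that introducing the new generator does not accidentally make some $f\in\CF{\C}$ non-minimal or create a genuinely new minimal pseudo-monomial. Both reduce to the fact that $x_{n+1}$ (resp. $1-x_{n+1}$) divides \emph{every} $x_{n+1}$-involving product while dividing \emph{no} element of $\CF{\C}$ (as those have no $x_{n+1}$ factor), so the minimality partial order cleanly decouples into the "$x_{n+1}$-free part" $=\CF{\C}$ and the "$x_{n+1}$-involving part" $=\{1-x_{n+1}\}$. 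Once that decoupling is stated carefully the result follows.
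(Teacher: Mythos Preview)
Your proposal is correct and follows essentially the same route as the paper: both use Youngs' algorithm, split the products in $M(\nei{\D})$ according to whether the choice function hits $n+1$, observe that the constant choice $\tau\equiv n+1$ yields $(1-x_{n+1})^m\to 1-x_{n+1}$, and note that every other product touching $x_{n+1}$ is a multiple of this degree-one pseudo-monomial. Your write-up is in fact more careful than the paper's on one point: the paper infers $\CF{\C}\subseteq\CF{\D}$ directly from $M(\nei{\C})\subseteq M(\nei{\D})$, whereas you correctly isolate the additional fact that no \emph{new} divisor of an element of $\CF{\C}$ can appear (since the only new minimal pseudo-monomial is $1-x_{n+1}$, which divides nothing in $\CF{\C}$). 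The early detour through the ``trivial off'' convention is harmless once you reconcile with the statement; just fix the convention at the outset (here $c_{i,n+1}=1$ for all $i$, so the extra generator of each $P_{c_i}$ is $1-x_{n+1}$) and the argument runs cleanly.
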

\begin{proof}
	Observe that  $ M(\mathcal{J}_\C) \subseteq   M(\mathcal{J}_\D)  $ as $ F([m],[n])\subseteq F([m],[n+1]). $ Therefore $ \CF{\C}\subseteq \CF{\D}. $  Moreover, as the last neuron is always on, so $ c_{in+1}=1 $ for all $ i\in[m]. $  Let   $ \gamma\in F([m],[n+1]) $ be the function such that $ \gamma(i)=n+1 $ for all $ i $. As  $ \prod_{i=1}^{m} (x_{\tau(i)}^{}-c_{i\tau(i)}^{})\in M(\nei{\D}) $ we get  $ \prod_{i=1}^{m} (x_{n+1}^{}-c_{in+1}^{})=\prod_{i=1}^{m}(x_{n+1}^{}-1)=(1-x_{n+1}^{})^m\in M(\nei{\D}) $ when $ \tau=\gamma. $ This guarantees that the pseudo-monomial $ (1-x_{n+1}) $ appears in $ \CF{\D}. $  Also, note that any other pseudo-monomials involving $ x_{n+1} $ will be redundant. Hence the proof. 
\end{proof}
\begin{theorem}\label{thraddoff}
	Let $ \C, \D $ be two codes on $ n, n+1 $ neurons respectively with $ q:\C\to \D $ being a adding trivial off neuron. Then $ \CF{\D}=\CF{\C}\cup\{x_{n+1}\} $.
\end{theorem}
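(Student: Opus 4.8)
The plan is to mirror the proof of Theorem \ref{thraddon}, tracking the new index $n+1$ through Youngs' algorithm (Section \ref{youngsalgo}). Write $\C=\{c_1,\dots,c_m\}$. Since $q$ adds a trivial \emph{off} neuron, the codewords of $\D$ are exactly $c_1,\dots,c_m$ viewed on $[n+1]$ with $c_{i,n+1}=0$ for every $i\in[m]$. Hence for the constant function $\gamma\in F([m],[n+1])$ given by $\gamma(i)=n+1$ for all $i$, the corresponding element of $M(\nei{\D})$ is $\prod_{i=1}^m\bigl(x_{n+1}-c_{i,n+1}\bigr)=\prod_{i=1}^m x_{n+1}=x_{n+1}^m$, which collapses to $x_{n+1}$ after Step 3. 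So $x_{n+1}\in\widehat{M}(\nei{\D})$.

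First I would record the inclusion $\widehat{M}(\nei{\C})\subseteq\widehat{M}(\nei{\D})$: since $F([m],[n])\subseteq F([m],[n+1])$ and the generator $x_j-c_{ij}$ of $P_{c_i}$ is unchanged for $j\le n$, every element of $M(\nei{\C})$ reappears verbatim in $M(\nei{\D})$. More precisely, I would split $M(\nei{\D})$ according to whether the function $\tau$ ever takes the value $n+1$: if $\tau(i)\le n$ for all $i$, the product lies in $M(\nei{\C})$; if $\tau(i)=n+1$ for some $i$, the product is a multiple of $x_{n+1}-c_{i,n+1}=x_{n+1}$. Thus
$$\widehat{M}(\nei{\D})=\widehat{M}(\nei{\C})\cup\{\,x_{n+1}\cdot g\,\},$$
where every element in the second set is a multiple of $x_{n+1}$.

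Then I would run Step 4. Each new element $x_{n+1}\cdot g$ with $g$ nonconstant is a proper multiple of $x_{n+1}\in\widehat{M}(\nei{\D})$, hence deleted; and $x_{n+1}$ itself survives, because every element of $\widehat{M}(\nei{\C})$ has degree $0$ in $x_{n+1}$ and degree $\ge 1$ in $x_1,\dots,x_n$, so none of them divides $x_{n+1}$. Conversely, no surviving element of $\CF{\C}$ is killed by a new element: a divisor of a pseudo-monomial $f\in\widehat{M}(\nei{\C})$ cannot involve $x_{n+1}$ (since $f$ does not), whereas every new element does. Hence $\widetilde{M}(\nei{\D})=\widetilde{M}(\nei{\C})\cup\{x_{n+1}\}=\CF{\C}\cup\{x_{n+1}\}$, which is the claim.

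The only point requiring care — the analogue of the genuine content of Theorem \ref{thraddon} — is the bookkeeping in Step 4: one must check simultaneously that the minimal pseudo-monomials of $\J_\C$ remain minimal in $\J_\D$ and that the single genuinely new survivor is $x_{n+1}$. Both facts follow from the ``degree in $x_{n+1}$'' dichotomy above, so I expect no essential difficulty beyond stating it cleanly.
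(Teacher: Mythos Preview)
Your proposal is correct and follows exactly the approach the paper intends: the paper's own proof of Theorem \ref{thraddoff} consists solely of the sentence ``Proof of this theorem is similar to the previous one,'' and your argument is precisely that analogue of the proof of Theorem \ref{thraddon}, with $c_{i,n+1}=0$ in place of $c_{i,n+1}=1$ and $x_{n+1}$ in place of $1-x_{n+1}$. Your Step~4 bookkeeping (the ``degree in $x_{n+1}$'' dichotomy) is in fact more explicit than what the paper writes out for either theorem.
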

\no Proof of this theorem is similar to the previous one.  
\begin{theorem}\label{thrdup}
	Let $ \C, \D $ be two codes on $ n, n+1 $ neurons respectively with $ q:\C\to \D $ be the map that duplicates the $ i^{th} $ neuron. Then $ \CF{\D}=\CF{\C}\cup\{\alpha(x_{n+1}/x_i)\mid \alpha\in\CF{\C} \text{ and } x_i\vert \alpha\} \cup \{\alpha(1-x_{n+1})/(1-x_i)\mid \alpha\in\CF{\C} \text{ and } (1-x_i)\vert \alpha\} \cup \{x_i(1-x_{n+1}),(1-x_i)x_{n+1}\} $.
\end{theorem}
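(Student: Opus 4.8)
The plan is to use Youngs's algorithm from Section \ref{youngsalgo} applied to the code $\D$, where $\D$ is obtained from $\C$ by duplicating the $i$-th neuron (so $m$-neuron code $\C$ becomes $(m{+}1)$-neuron code $\D$, with $x_{n+1}$ a copy of $x_i$). Write $\C=\{c_1,\dots,c_m\}$ in binary form; then in $\D$ each codeword $c_k$ becomes $\tilde c_k$ with $\tilde c_{k,n+1}=c_{k,i}$. The key structural observation is that each ideal $P_{\tilde c_k}=\langle x_1-c_{k1},\dots,x_n-c_{kn},\,x_{n+1}-c_{k,i}\rangle$ has one extra generator compared to $P_{c_k}$, and that extra generator equals $x_{n+1}-x_i$ \emph{inside} $P_{\tilde c_k}$, i.e. it is either $x_{n+1}$ (when $c_{k,i}=0$) or $1-x_{n+1}$ (when $c_{k,i}=1$). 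So every element of $M(\nei{\D})$ is a product $\prod_{k=1}^m g_k$ where each $g_k$ is either one of the "old" generators $x_{\tau(k)}-c_{k\tau(k)}$ with $\tau(k)\in[n]$, or the new generator attached to $x_{n+1}$.

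First I would partition $M(\nei{\D})$ according to the set $S\subseteq[m]$ of indices $k$ for which $g_k$ is the new ($x_{n+1}$-involving) generator. For $S=\emptyset$ we recover exactly $M(\nei{\C})$, so after reduction these contribute $\CF{\C}$; I would note $\CF{\C}\subseteq\CF{\D}$ since none of these pseudo-monomials involves $x_{n+1}$ and (as we will see) they are not multiples of anything new. For $S\neq\emptyset$, the product contains a factor $x_{n+1}$ (if some $k\in S$ has $c_{k,i}=0$) and/or a factor $1-x_{n+1}$ (if some $k\in S$ has $c_{k,i}=1$); but $x_{n+1}(1-x_{n+1})=0$ by Step 3, so a surviving product uses a single fixed choice of $x_{n+1}$ or $1-x_{n+1}$ raised to a power, which idempotency collapses to degree one. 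Thus every surviving element with $S\neq\emptyset$ has the form $\beta\cdot x_{n+1}$ or $\beta\cdot(1-x_{n+1})$, where $\beta=\prod_{k\notin S}(x_{\tau(k)}-c_{k\tau(k)})$ is a product over the complement indices using only old generators, \emph{but} with the constraint that for $k\in S$ we had $c_{k,i}$ all equal (to $0$, resp. $1$) — equivalently, $k\in S\implies c_{k,i}=0$ in the $x_{n+1}$-case, and $=1$ in the $(1-x_{n+1})$-case.

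Next I would identify which $\beta$ arise. In the $x_{n+1}$-case, $S$ ranges over nonempty subsets of $\{k: c_{k,i}=0\}$ and $\beta$ ranges over products $\prod_{k\notin S}(x_{\tau(k)}-c_{k\tau(k)})$. Taking $S=\{k:c_{k,i}=0\}$ maximal gives $\beta$ a product over $\{k:c_{k,i}=1\}$, and since every such $c_k$ has $x_i$ as a true bit, each such $\beta$ that survives reduction and divisibility is exactly an element of $\widetilde M$ built from those rows; comparing with how $x_i$ enters $\CF{\C}$, these $\beta$ are precisely the $\alpha\in\CF{\C}$ with $x_i\mid\alpha$, after factoring: $\alpha\cdot x_{n+1}$ is a multiple of $\alpha$, so it would be killed in Step 4 \emph{unless} we instead record $\alpha(x_{n+1}/x_i)$, which has the same degree as $\alpha$ and is not a multiple of $\alpha$. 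The smaller choices of $S$ give products that are multiples of these and drop out. This yields the family $\{\alpha(x_{n+1}/x_i)\mid\alpha\in\CF{\C},\,x_i\mid\alpha\}$; symmetrically the $(1-x_{n+1})$-case gives $\{\alpha(1-x_{n+1})/(1-x_i)\mid\alpha\in\CF{\C},\,(1-x_i)\mid\alpha\}$. Finally, the pairs $S$ straddling both bit-values: picking one $k$ with $c_{k,i}=0$ forced to $x_{n+1}$ and using an old generator $x_i$ from some $k'$ with $c_{k',i}=1$ (which always exists unless $\C$ has a constant $i$-column, a degenerate case handled separately) produces $x_i x_{n+1}$; dually $(1-x_i)(1-x_{n+1})$ — wait, reduction shows the genuinely new minimal generators here are $x_i(1-x_{n+1})$ and $(1-x_i)x_{n+1}$, coming from mixing an $x_{n+1}$-factor with an $(1-x_i)$-old-generator and vice versa. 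I would verify these two quadratics are minimal (degree $2$, not divisible by anything already listed) and that every other straddling product is a multiple of one of the already-collected pseudo-monomials.

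The main obstacle I anticipate is the bookkeeping in Step 4: showing that no spurious lower-degree pseudo-monomial survives and that nothing on the claimed list is redundant — in particular, ruling out that some $\alpha(x_{n+1}/x_i)$ is a multiple of $x_i(1-x_{n+1})$ or of an old $\alpha'\in\CF{\C}$, and confirming completeness (every minimal pseudo-monomial of $\J_\D$ appears). A clean way around the combinatorial explosion is to argue at the level of varieties/zero-sets: $\J_\D$ vanishes exactly on $\D$, and one checks directly that the proposed generating set cuts out $\D$ and that each listed pseudo-monomial is minimal in $\J_\D$ (no proper pseudo-monomial divisor lies in $\J_\D$), which by definition of canonical form forces equality $\CF{\D}=$ the listed set. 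I would present the algorithmic computation for intuition and use the minimality-plus-variety argument to make the proof rigorous, treating the degenerate case (column $i$ constant on $\C$) as an easy separate check.
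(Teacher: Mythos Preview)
Your approach is essentially the paper's: both run Youngs's algorithm on $\D$, observe $M(\nei{\C})\subseteq M(\nei{\D})$ via $F([m],[n])\subseteq F([m],[n+1])$, and then obtain the remaining pseudo-monomials by substituting $x_{n+1}$ for $x_i$ (your partition by the set $S$ of indices using the new generator is exactly the paper's passage from a function $\gamma$ hitting $i$ to the modified $\gamma'$ hitting $n+1$). The paper's own proof is in fact sketchier than your proposal --- it stops at ``rest of the result follows from the algorithm'' and does not carry out the Step~4 bookkeeping or the minimality/variety verification you outline.
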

\begin{proof}
	Observe that here too $ M(\mathcal{J}_\C) \subseteq   M(\mathcal{J}_\D)  $ as $ F([m],[n])\subseteq F([m],[n+1]). $ Therefore $ \CF{\C}\subseteq \CF{\D}. $  Moreover, as the last neuron is the $ i^{\text{th}} $ neuron, i.e., $ c_{kn+1}=c_{ki} $ for all $ k\in[m]. $  Choose  $ \gamma\in F([m],[n]) $ such that there exists $ k\in[m] $ such that $ \gamma(k)=i $. Then define  $ \gamma'\in F([m],[n+1]) $  such that $$ \gamma'(k)=\begin{cases}
		\gamma(k) \quad \text{ if } \gamma(k)\not=i \\
		n+1 \quad \text{ if } \gamma(k) =i.
	\end{cases} $$ Notice that the pseudo-monomials that were involved in $ M(\nei{\C}) $ with $ x_i $ or $ 1-x_i $ term present in it (say $ \alpha $) comes from the function $ \gamma. $ Also, now $ \gamma' $ replaces $ x_i $ with $ x_{n+1} $ in $ \alpha $ and we get that this new pseudo-monomial is present in $ M(\nei{\D}). $ Rest of the result follows from the algorithm. 
\end{proof}
\begin{theorem}\label{thrproj}
	Let $ \C, \D $ be two codes on $ n, n-1 $ neurons respectively with $ q:\C\to \D $ being a projection map (delete the $ n^{\text{th}} $ neuron). Then $ \CF{\D}=\{\alpha\in\CF{\C}\mid x_{n}\not\vert\alpha \text{ and } (1-x_n)\not\vert\alpha\}. $
\end{theorem}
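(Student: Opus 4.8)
To prove the statement I would reduce everything to a single bookkeeping lemma: a pseudo-monomial $g=\prod_{i\in\sigma}x_i\prod_{j\in\tau}(1-x_j)$ with $\sigma,\tau\subseteq[n-1]$ lies in $\J_\C$ if and only if it lies in $\J_\D$. Granting this, the theorem follows quickly. Recall that $\CF{\C}$ (resp.\ $\CF{\D}$) is by definition the set of \emph{minimal} pseudo-monomials of $\J_\C$ (resp.\ $\J_\D$), minimality being with respect to divisibility among pseudo-monomials, and recall that $\CF{\D}\subseteq\mathbb{F}_2[x_1,\dots,x_{n-1}]$, so no element of $\CF{\D}$ is divisible by $x_n$ or by $1-x_n$. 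If $\alpha\in\CF{\C}$ is divisible by neither $x_n$ nor $1-x_n$, then $\alpha$, and every pseudo-monomial dividing it, is supported on $[n-1]$; the lemma turns ``$\alpha\in\J_\C$'' into ``$\alpha\in\J_\D$'' and, applied to any putative proper pseudo-monomial divisor of $\alpha$, turns minimality of $\alpha$ in $\J_\C$ into minimality in $\J_\D$, so $\alpha\in\CF{\D}$. Conversely, if $\alpha\in\CF{\D}$ then $\alpha$ and its pseudo-monomial divisors are all supported on $[n-1]$, and the lemma runs the same argument in reverse to give $\alpha\in\CF{\C}$; since $\alpha\in\mathbb{F}_2[x_1,\dots,x_{n-1}]$ it is automatically divisible by neither $x_n$ nor $1-x_n$. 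Together these give both inclusions of the claimed equality.

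For the lemma I would use the standard description of the pseudo-monomials in a neural ideal: a pseudo-monomial $g$ with on-set $\sigma$ and off-set $\tau$ lies in $\J_\C$ exactly when no codeword $c\in\C$ satisfies $\sigma\subseteq c$ and $\tau\cap c=\emptyset$ (equivalently, $g$ vanishes on $\C$); this is immediate from the identity $g=\sum_{\sigma\subseteq v,\ v\cap\tau=\emptyset}\rho_v$ over $\mathbb{F}_2$ together with $\J_\C=\langle\rho_v\mid v\notin\C\rangle$ and $\J_\C\subseteq I_\C$. The only feature of the projection map that is needed is that $\D=\{\,c\cap[n-1]\mid c\in\C\,\}$, so that every $c\in\C$ projects into $\D$ and every $d\in\D$ lifts to some $c\in\C$ with $c\cap[n-1]=d$. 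Now suppose $g\notin\J_\D$ and pick $d\in\D$ with $\sigma\subseteq d$, $\tau\cap d=\emptyset$; lifting to $c\in\C$ and using $\sigma,\tau\subseteq[n-1]$ gives $\sigma\subseteq d\subseteq c$ and $\tau\cap c=\tau\cap(c\cap[n-1])=\tau\cap d=\emptyset$, so $g\notin\J_\C$. Symmetrically, if $g\notin\J_\C$ pick $c\in\C$ with $\sigma\subseteq c$, $\tau\cap c=\emptyset$ and set $d=c\cap[n-1]\in\D$; then $\sigma\subseteq c\cap[n-1]=d$ and $\tau\cap d\subseteq\tau\cap c=\emptyset$, so $g\notin\J_\D$. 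The two contrapositives are exactly the lemma.

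The argument is essentially all bookkeeping, and the one place where something genuinely has to be checked is the hypothesis $\sigma,\tau\subseteq[n-1]$: this is what forces $\sigma\subseteq c\iff\sigma\subseteq c\cap[n-1]$ and $\tau\cap c=\tau\cap(c\cap[n-1])$, so that projecting and lifting codewords is compatible with the on-set/off-set data of $g$. The only ingredient not spelled out in the excerpt is the characterization of pseudo-monomials in $\J_\C$ recalled above (due to Curto et al.). If instead one wanted to derive the theorem purely from the algorithm of Section~\ref{youngsalgo}, in the style of Theorems~\ref{thraddon}--\ref{thrdup}, the extra obstacle would be \emph{codeword collapse}: when $c$ and $c\cup\{n\}$ both lie in $\C$ they are identified in $\D$, so the index set of the product $M(\J_\D)$ is smaller than that of $M(\J_\C)$, and one must then argue that the duplicated linear factors so produced disappear after the reduction $x_i^2=x_i$, $(1-x_i)^2=1-x_i$ and that nothing additional is discarded in the ``remove multiples'' step. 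That route is workable but fiddly, whereas the combinatorial characterization makes the collapse invisible, so that is the route I would take.
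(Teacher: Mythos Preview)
Your proof is correct and takes a genuinely different route from the paper. The paper argues entirely through Youngs' algorithm of Section~\ref{youngsalgo}: it observes that $F([m],[n-1])\subseteq F([m],[n])$, so the products forming $M(\J_\D)$ are exactly those products in $M(\J_\C)$ indexed by functions avoiding the value $n$, and then claims the result follows after the reduction and minimality steps. You instead bypass the algorithm and work directly with the combinatorial description of pseudo-monomials in a neural ideal from Curto et al., reducing everything to your key lemma that a pseudo-monomial supported on $[n-1]$ lies in $\J_\C$ iff it lies in $\J_\D$. The payoff of your route is twofold. First, it is fully self-contained once the characterization $g\in\J_\C\iff g$ vanishes on $\C$ is granted, with minimality on both sides handled by the observation that divisors of a pseudo-monomial supported on $[n-1]$ are again supported on $[n-1]$. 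Second, and more importantly, it sidesteps the codeword-collapse issue you flag: the paper's proof tacitly uses the same $m$ for $|\C|$ and $|\D|$, whereas in general $|\D|<|\C|$ when both $c$ and $c\cup\{n\}$ belong to $\C$, so the index set $F([|\D|],[n-1])$ is not literally a subset of $F([|\C|],[n])$ and one must argue that the duplicated linear factors and the subsequent reductions behave correctly. Your approach makes this issue disappear; the paper's sketch leaves it to the reader.
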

\begin{proof}
	Notice that $ F([m],[n-1])\subseteq F([m],[n]) $. Hence we get $ \CF{\D}\subseteq \CF{\C}. $ Let  $E= \{\alpha\in F([m],[n]) \mid \text{  there exists } k\in[m] \text{ with } \alpha(k)=n \}$ . Clearly $ F([m],[n-1])= F([m],[n])\backslash E. $ So, the result follows. 
\end{proof}
\section{General relationship graph (GRG)} \label{sec: grg}
In this section, we first see the definition of general relationship graph for degree two codes as given by Gross et al. \cite{gross2018neural}. Later, we will define a simplicial complex called as general relationship complex for a given code. Using this complex we will extend the definition of general relationship graph to every code. Later, we will study the behavior of general relationship graphs under the elementary code maps of Theorem \ref{thmnycc1}.
	\begin{definition}[Degree two codes]
	A code $ \C $ is said to be of degree two  if every pseudo-monomial in $ \CF{\C} $ has degree exactly two. 
\end{definition}
\begin{definition}[General relationship graph for degree two codes]\cite{gross2018neural}
	Let $ \C $ be a degree two code on $ n $ neurons then define $ G(\C) $ to be the general relationship graph with vertex set $ V=[n] $ and an edge $ {i,j} $ appears if and only if $ \CF{\C} $ doesn't contain a two variable pseudo-monomial whose two variable are $ x_i $ and $ x_j $ i.e.,  none of  $ x_ix_j, x_i(1-x_j), x_j(1-x_i) $ and $ (1-x_i)(1-x_j) $ belongs to $ \CF{\C}. $  
\end{definition}

\begin{eg}
	Let $ \C=\{\emptyset,1,2,3,4,12,14,23,24\} $. Then computing $\CF{\C}$ using sage math algorithm given by Peteresen et al.\cite{petersen2018neural} $ \CF{\C}=\{x_1x_3,x_2x_4\}. $ Figure \ref{figgcs} represents $ G(\C). $
\end{eg}
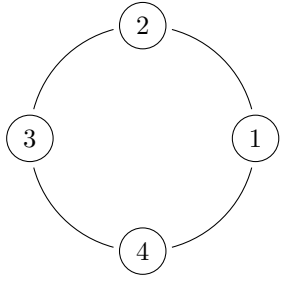
\begin{figure}
	\centering
	\begin{tikzpicture}
		
		\def \n {4}
		\def \radius {1.5cm}
		\def \margin {15} % margin in angles, depends on the radius
		
		\foreach \s in {1,...,\n}
		{
			\node[draw, circle] at ({360/\n * (\s - 1)}:\radius) {$\s$};
			\draw[-, >=latex] ({360/\n * (\s - 1)+\margin}:\radius) 
			arc ({360/\n * (\s - 1)+\margin}:{360/\n * (\s)-\margin}:\radius);
		}
	\end{tikzpicture}
	\caption{$ G(\C) $ when $\C=\{\emptyset,1,2,3,4,12,14,23,24\}$.}
	\label{figgcs}
\end{figure}
\no Now, we will define general relationship complex. Before that denote, $E_\sigma=\{x_i,1-x_i\mid i\in\sigma\},$ for every $\sigma\subseteq [n]$. 
\begin{definition}[General relationship complex]
	Let $\C$ be a code on $n$ neurons. Then g\textit{General relationship complex} is denoted by $GR(\C)$ and is a sub-complex of $\mathcal{P}([n])$\footnote{$\mathcal{P}(A)$ denotes the power set of $A$. Moreover it is clearly a simplicial complex. } satisfying,
	$$GR(\C)=\left\{\sigma \subseteq [n] \ \Big\vert \prod_{\gamma\in\Gamma} \gamma \notin \CF{\C} \text{ for any } \Gamma\subseteq E_\sigma   \right\}.$$ 
\end{definition}	

\begin{eg}
	In this example we will consider various $\J_\C$ and write down their respective $GR(\C)$. \label{exgrc}
	\begin{enumerate} 
		\item $\CF{\C}=\{x_1x_2x_3,x_1x_2\}$ then  $ GR(\C)=\Delta(\{13,23\}).$ Refer to Figure \ref{figgrca}.   
		\item $\CF{\C}=\{x_1x_2,x_1x_3\}$ then $GR(\C)= \Delta{(\{1,23\})}$. Refer to Figure \ref{figgrcb}.
		\item  $\CF{\C}=\{x_1x_2,x_2x_4\}$ then $GR(\C)=\Delta(\{134,23\})$. Refer to \ref{figgrcc}. \label{exgrg=1sgrc}
	\end{enumerate}
	
\end{eg}

\begin{figure}[h!]
	\centering
	\begin{subfigure}[b]{0.2\linewidth}
		\centering
		\begin{tikzpicture}  
			\vertex (1) at (0,0) [label=below:1] {};
			\vertex (2) at (1,0) [label=below:3]{};
			\vertex (3) at (2,0) [label=below:2]{};
			
			\path[-]
			(1) edge (2)
			(2) edge (3)
			;
		\end{tikzpicture} \caption{}
		\label{figgrca}
	\end{subfigure}
	\begin{subfigure}[b]{0.2\linewidth}
		\centering
		\begin{tikzpicture}  
			\vertex (1) at (0,0) [label=below:1] {};
			\vertex (2) at (1,0) [label=below:2]{};
			\vertex (3) at (2,0) [label=below:3]{};
			
			\path[-]
			
			(2) edge (3)
			;
		\end{tikzpicture}\caption{}\label{figgrcb}\end{subfigure}
	\begin{subfigure}[b]{0.2\linewidth}
		\centering
		\begin{tikzpicture}[scale=0.5]
			\tikzstyle{point}=[circle,thick,draw=black,fill=black,inner sep=0pt,minimum width=4pt,minimum height=4pt]
			\node (a)[point, label={[label distance=-0.7cm]:$4$}] at (0,0) {};
			\node (b)[point,label={[label distance=0cm]5:$1$}] at (3,0) {};
			\node (c)[point,label={[label distance=-0.6cm]4:$3$}] at (2,2) {};

			\node (d)[point,label={[label distance=-0.7cm]2:$2$}] at (0,2) {};
			%			\node (f)[point,label={[label distance=0cm]2:$6$}] at (4,2) {};

			%\node (p)[point,label={[label distance=0cm]5:$P$}] at (1.5,0.5) {};
			
			%\draw[pattern=north east lines] (a.center) -- (p.center) -- (b.center) -- cycle;
			\draw[pattern=north west lines] (a.center) -- (b.center) -- (c.center) -- cycle;
			%\draw[pattern=vertical lines]   (b.center) -- (p.center) -- (c.center) -- cycle;
			%		\draw[pattern=dots] (d.center) -- (e.center) -- (f.center) -- cycle;
			%	\draw (a.center) -- (d.center);
			%	\draw (e.center)-- (d.center);
			\draw (d.center)--(c.center);
		\end{tikzpicture}
		\caption{}
		\label{figgrcc}
	\end{subfigure}
	\caption{}
\end{figure}
	\begin{figure}[h]
	\centering
	\begin{tikzpicture}[scale=0.5]
		\tikzstyle{point}=[circle,thick,draw=black,fill=black,inner sep=0pt,minimum width=4pt,minimum height=4pt]
		\node (a)[point, label={[label distance=-0.7cm]:$4$}] at (0,0) {};
		\node (b)[point,label={[label distance=0cm]5:$1$}] at (3,0) {};
		\node (c)[point,label={[label distance=-0.6cm]4:$3$}] at (2,2) {};

		\node (d)[point,label={[label distance=-0.7cm]2:$2$}] at (0,2) {};
		%			\node (f)[point,label={[label distance=0cm]2:$6$}] at (4,2) {};

		%\node (p)[point,label={[label distance=0cm]5:$P$}] at (1.5,0.5) {};
		
		%\draw[pattern=north east lines] (a.center) -- (p.center) -- (b.center) -- cycle;
		%	\draw[pattern=north west lines] (a.center) -- (b.center) -- (c.center) -- cycle;
		%\draw[pattern=vertical lines]   (b.center) -- (p.center) -- (c.center) -- cycle;
		%		\draw[pattern=dots] (d.center) -- (e.center) -- (f.center) -- cycle;
		\draw (a.center) -- (c.center);
		\draw (a.center) -- (b.center);
		\draw (b.center)-- (c.center);
		\draw (d.center)--(c.center);
	\end{tikzpicture}
	\caption{$G(\C)$ when $\CF{\C}=\{x_1x_2,x_2x_4\}$. }
	\label{figgrgd}
\end{figure}
\begin{remark}
	Further in Example \ref{exgrc}(\ref{exgrg=1sgrc}), note that $\C$ is a degree two code and $G(\C)$ can be seen in Figure \ref{figgrgd}. So we get that the 1-skeleton of $GR(\C)$ is $G(\C).$ This example motivates us to define general relationship graph for any code as follows: 
	
\end{remark}

\begin{definition}[General relationship graph for any degree code]
	Given any code $\C$ we define general relationship graph, $G(\C)$ to be the 1-skeleton of $GR(\C).$
\end{definition}
\no Note that, this definition is a clear extension of general relationship graphs of degree two codes.  
\begin{remark}
	Note that when $\CF{J_\C}$ does not contain any two degree codeword then $G(\C)$ is complete graph. 
\end{remark}
\subsection{Behaviour of general relationship graph under elementary code maps}
Let $\C$ and $\D$ be two codes with $q:\C\to \D$ as an elementary code map.  Let $V(G),E(G)$ be the vertex and edge set of a graph $G$, respectively. We will describe $V(G(\D)) $ and $E(G(\D))$, in terms with  $V(G(\C)) $ and $E(G(\C)),$ respectively, for various elementary code maps. 
\begin{enumerate}
	\item \textbf{Permutation:} In this case, let $\C$ be a code on $n$ neurons and $\D$ be a code obtained after permuting every codeword of $\C$ by a permutation $ \gamma\in S_n$. From Theorem \ref{thrperm},  we get the following data about the general relationship graph of $\D$. \begin{enumerate}
		\item $V(G(\D))=\gamma(V(G(\C)))$, i.e., $V(G(\D))=\{x_{\gamma(i)}\mid x_i\in V(G(\C))\}$,
		\item  $E(G(\D))=\gamma(E(G(\C))),  $ i.e., $  E(G(\D))= \{x_{\gamma(i)}x_{\gamma(j)} \mid x_ix_j\in E(G(\C)) \}.$ 
	\end{enumerate}
	Clearly, $G(\C)$ and $G(\D)$ are isomorphic as graphs, since the canonical forms of $\J_\C$ and $\J_\D$ are bijective via the permutation $\gamma$, i.e., $\CF{\C}=\gamma(\CF{\D}).$ 
	\item \textbf{Adding a trivial neuron:} In this case, let $\C$ be a code on $n$ neurons and $\D$ be a code obtained after adding a trivial neuron as the  $n+1^{\text{th}}$ neuron for every codeword in $\C$. From Theorem \ref{thraddon} and Theorem \ref{thraddoff},  we get the following data about the general relationship graph of $\D$. \begin{enumerate}
		\item $V(G(\D))=V(G(\C))$,
		\item  $E(G(\D))=E(G(\C)). $
	\end{enumerate}
	Note that the vertex set does not have the vertex ${x_{n+1}}$ as $x_{n+1}$ or $1-x_{n+1}$ appear in $\CF{\D}$. SO, by definition of general relationship complex we do not have $x_{n+1}$ in it. Hence the $G(\D)$ is exactly same as $G(\C)$. 
		\item \textbf{Adding a duplicate  neuron:} In this case, let $\C$ be a code on $n$ neurons.  Let $ i^{\text{th}}$ neuron be duplicated and added as the $n+1^{\text{th}}$ neuron to obtain $\D$. From Theorem \ref{thrdup}, we get the following data about the general relationship graph of $\D$. \begin{enumerate}
		\item $V(G(\D))=V(G(\C))\cup \{x_{n+1}\}$,
		\item  $E(G(\D))=E(G(\C))\cup \{x_jx_{n+1}\mid x_jx_i\in E(G(\C))\}. $
	\end{enumerate}
		\item \textbf{Projection:} Let $\C$ be a code on $n$ neurons and delete the $n^{\text{th}}$ neuron from every codeword to obtain $\D$. So $\D$ is a code on $n-1$ neurons. From Theorem \ref{thrproj} we get the following data about the general relationship graphs of $\D$. \begin{enumerate}
		\item $V(G(\D))=V(G(\C))\setminus \{x_n\}$,
		\item  $E(G(\D))=E(G(\C))\setminus \{x_ix_{n} \in E(G(\C))\mid i\in [n]\}. $
	\end{enumerate}
\end{enumerate} 
\no We conclude this section with an open question to find the behaviour of canonical forms and general relationship graphs under  morphism. In the last section, we compare GRG and CCG for certain codes. 
\section{Comparing GRG and CCG for certain codes}
In this section we will look at GRG for complete and 2-regular codes. We will then compare these graphs to CCGs and study their behaviour.  
\subsection{GRG for complete codes}
Recall for any $m\in \N$, $\C_m=\{\emptyset,1,12,\dots,12\cdots m-1\}$ is a complete code with cardinality $m$. We will first study $G(\C_m)$ in this section. Using Sage math algorithm given by Petersen et al. \cite{petersen2018neural} we obtain $\CF{\C_m}$ for some $m$. They are as follows: 
\begin{enumerate}
	\item $\CF{\C_3}=\{x_2\cdot(1-x_1)\}$
	\item $\CF{\C_4}=\left\{x_{2} \cdot (1-x_{1}), x_{3} \cdot (1-x_{1}), x_{3} \cdot (1-x_{2})\right\}$
	\item $\CF{\C_5}= \left\{x_{2} \cdot (1-x_{1}), x_{3} \cdot (1-x_{1}), x_{3} \cdot (1-x_{2}), x_{4} \cdot (1-x_{1}), x_{4} \cdot (1-x_{2}), x_{4} \cdot (1-x_{3})\right\}$
	\item $\CF{\C_6}= \{x_{2} \cdot (1-x_{1}), x_{3} \cdot (1-x_{1}), x_{3} \cdot (1-x_{2}), x_{4} \cdot (1-x_{1}), x_{4} \cdot (1-x_{2}), x_{4} \cdot (1-x_{3})  x_{5} \cdot (1-x_{1}), x_{5} \cdot (1-x_{2}), x_{5} \cdot (1-x_{3}), x_{5} \cdot (1-x_{4})\}$
\end{enumerate} Observe that for $ m\in \{3,4,5,6\}$ we have $\C_m$ is a degree two code and clearly $G(\C_m)$ is a totally disconnected graph\footnote{A graph with empty edge set is called totally disconnected graph.}. In other-words the complement of $G(\C_m)$ is a complete graph for $m\in\{3,4,5,6\}$. We further claim this is true for all $m\in \N\backslash\{1,2\}.$

\begin{lemma} \label{lemmacfcm}
	Let $\C_m=\{\emptyset,1,12,\dots,12\cdots m-1\}$ for any $ m\in \N \backslash \{1,2\}$. Then $\CF{\C_m}=\{x_i(1-x_j) \mid \text{ for all } i,j \in [m] \text{ with } i> j \}.$
\end{lemma}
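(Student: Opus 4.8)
The plan is to read $\CF{\C_m}$ off directly from Youngs' algorithm of Section~\ref{youngsalgo} applied to the chain $\C_m$. First I would fix notation: write $\C_m=\{c_1,\dots,c_m\}$ with $c_i=\{1,\dots,i-1\}$, so that $c_1=\emptyset\subsetneq c_2\subsetneq\cdots\subsetneq c_m=\{1,\dots,m-1\}$; in particular $\C_m$ is a code on $m-1$ neurons, so the indices $i,j$ in the statement are understood to run over $[m-1]$, consistent with the examples $\CF{\C_3},\dots,\CF{\C_6}$ computed above. The binary forms satisfy $c_{ij}=1$ iff $j<i$, so Step~1 gives $P_{c_i}=\ideal{1-x_1,\dots,1-x_{i-1},x_i,\dots,x_{m-1}}$; equivalently, for a function $\tau:[m]\to[m-1]$ the $i$-th factor of the corresponding product in $M(\J_{\C_m})$ is $1-x_{\tau(i)}$ when $\tau(i)<i$ and $x_{\tau(i)}$ when $\tau(i)\ge i$.

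Next I would isolate the combinatorics of Steps~3--4. The product attached to $\tau$ is nonzero modulo the relations $x_\ell(1-x_\ell)=0$ precisely when, for each $\ell\in[m-1]$, the fibre $\tau^{-1}(\ell)$ is contained in $\{1,\dots,\ell\}$ or in $\{\ell+1,\dots,m\}$; in that case the reduced pseudo-monomial equals $\prod_{\ell\in A_\tau}x_\ell\cdot\prod_{\ell\in B_\tau}(1-x_\ell)$, where $A_\tau=\{\ell:\emptyset\neq\tau^{-1}(\ell)\subseteq\{1,\dots,\ell\}\}$ and $B_\tau=\{\ell:\emptyset\neq\tau^{-1}(\ell)\subseteq\{\ell+1,\dots,m\}\}$ are disjoint. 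I would then prove two facts. First, $A_\tau$ and $B_\tau$ are both nonempty, since the factor from $i=1$ is always of $x$-type (as $\tau(1)\ge1$) and the one from $i=m$ is always of $(1-x)$-type (as $\tau(m)\le m-1<m$), forcing $\tau(1)\in A_\tau$ and $\tau(m)\in B_\tau$. Second, there exist $i\in A_\tau$, $j\in B_\tau$ with $i>j$: otherwise $\max A_\tau<\min B_\tau$, but then for $k=\min B_\tau$ the value $\tau(k)$ can lie neither in $A_\tau$ (which would give $k\le\tau(k)\le\max A_\tau<k$) nor in $B_\tau$ (which would give $k\ge\tau(k)+1\ge\min B_\tau+1=k+1$), contradicting that $\tau(k)$ lies in the image of $\tau$, which is $A_\tau\cup B_\tau$.

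With these in hand, both inclusions are short. For $\supseteq$: given $i>j$ in $[m-1]$, let $\tau$ send $\{1,\dots,i\}$ to $i$ and $\{i+1,\dots,m\}$ to $j$; its fibres are admissible and the reduced product is exactly $x_i(1-x_j)$, so $x_i(1-x_j)\in\widehat{M}(\J_{\C_m})$, and it survives Step~4 because its only candidate proper pseudo-monomial divisors are $x_i$ and $1-x_j$, and neither belongs to $\widehat{M}(\J_{\C_m})$ (the only $\tau$ that could produce $x_i$ alone is the constant map $\tau\equiv i$, which picks up a $1-x_i$ factor at $k=m$; similarly $\tau\equiv j$ picks up an $x_j$ factor at $k=1$; both reduce to $0$). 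For $\subseteq$: every $f\in\CF{\C_m}=\widetilde{M}(\J_{\C_m})$ is the reduced product of some admissible $\tau$, so by the two facts above some $x_i(1-x_j)$ with $i>j$ divides $f$ and already lies in $\widehat{M}(\J_{\C_m})$; minimality of $f$ under divisibility then forces $f=x_i(1-x_j)$.

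I expect the only genuinely delicate point to be the bookkeeping around Steps~3--4 — in particular checking that no degree-one pseudo-monomial survives the algorithm, which is exactly what keeps the $x_i(1-x_j)$ from being discarded as multiples and what pins the surviving generators down to degree two. As a cross-check and an alternative proof avoiding the algorithm, one can use the standard fact (Curto et al.\ \cite{curto2013neural}) that a pseudo-monomial $\prod_{i\in\sigma}x_i\prod_{j\in\tau}(1-x_j)$ lies in $\J_\C$ iff it vanishes on every codeword of $\C$: for the chain $\C_m$ this holds iff $\sigma\neq\emptyset$, $\tau\neq\emptyset$ and $\min\tau\le\max\sigma$ (i.e.\ some $i\in\sigma$, $j\in\tau$ satisfy $i>j$), and imposing minimality collapses $\sigma$ and $\tau$ to singletons, yielding the same list.
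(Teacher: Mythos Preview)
Your argument is correct but follows a different route from the paper. The paper's proof is essentially one line: it recalls the open realization $\U_m=\{(i,m):i\in[m-1]\}$ of $\C_m$ in $X=\R$ constructed earlier when showing complete codes are open convex, and then invokes Theorem~\ref{thmcurtcan} (Curto et al.'s realization-to-canonical-form theorem), under which the containments $U_i\subseteq U_j$ for $i>j$ immediately yield $\CF{\C_m}=\{x_i(1-x_j):i>j\}$. You instead run Youngs' algorithm from Section~\ref{youngsalgo} combinatorially, isolating exactly which functions $\tau:[m]\to[m-1]$ survive Steps~3--4; this is longer but entirely self-contained, requiring neither a realization nor Theorem~\ref{thmcurtcan}, and the key step (your Fact~2, that some $i\in A_\tau$ exceeds some $j\in B_\tau$) is a clean pigeonhole-type observation. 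The vanishing-criterion alternative you sketch at the end is also valid and closer in spirit to the paper's approach, though still distinct since the paper works via the geometry of a specific realization rather than codeword-by-codeword evaluation. Your remark that the statement's ``$[m]$'' should be read as ``$[m-1]$'' is correct and consistent with the computed examples $\CF{\C_3},\dots,\CF{\C_6}$.
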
 
Before we prove the above lemma, we will note down a theorem given by Curto et al. \cite{curto2013neural} to obtain the canonical form of a code given one of its open realization.  
\begin{theorem}\cite[Theorem 4.3]{curto2013neural}
	Let $\C$ be a code on $n$ neurons, and let $\U=\{U_1,\dots,U_n\}$ be any collection of open sets (not necessarily convex) in a nonempty stimulus space $X$ such that $\C=\C(\U)$. The canonical form of $\J_\C$ is:  \label{thmcurtcan}
	\begin{align*}
		\J_\C = & \langle \{x_\sigma \mid \sigma \text{ is minimal w.r.t } U_\sigma=\emptyset\}, \\ & \{ x_\sigma\prod_{i\in\tau}^{}(1-x_i) \mid \sigma,\tau \not = \emptyset, \sigma\cap\tau=\emptyset, U_\sigma \not= \emptyset,\bigcup_{i\in\tau} U_i \not = X, \text{ and } \sigma,\tau \text{ are each minimal w.r.t. } \\ & U_\sigma \subseteq \bigcup_{i\in\tau} U_i \},  \{ \prod_{i\in\tau}(1-x_i) \mid \tau \text{ is minimal w.r.t } X \subseteq \bigcup_{i\in\tau} U_i \} \rangle. 
	\end{align*}
\end{theorem}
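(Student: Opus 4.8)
The plan is to prove the theorem entirely at the level of pseudo-monomials: since $\CF{\C}$ is by definition the set of minimal pseudo-monomials of $\J_\C$, it suffices to (i) decide exactly which pseudo-monomials lie in $\J_\C$, and (ii) among those, single out the minimal ones, then check that the three families in the statement are precisely this set. For step (i) I would first invoke the standard fact that a pseudo-monomial $f$ belongs to $\J_\C$ if and only if it vanishes on every codeword of $\C$. This holds because a pseudo-monomial $x_\sigma\prod_{j\in\tau}(1-x_j)$ (with $\sigma\cap\tau=\emptyset$) is already reduced modulo the Boolean relations $x_i^2=x_i$, so that for such elements membership in $\J_\C$ coincides with membership in the full vanishing ideal $I_\C$.

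Next I would set up the geometric dictionary using the realization $\C=\C(\U)$. For a point $p$ in the stimulus space $X$ write $c(p)=\{i\mid p\in U_i\}$, so that $\C=\{c(p)\mid p\in X\}$ and $\sigma\in\C$ iff $\atom{\sigma}\neq\emptyset$. Evaluating $x_\sigma\prod_{j\in\tau}(1-x_j)$ at $c(p)$ yields $1$ precisely when $\sigma\subseteq c(p)$ and $\tau\cap c(p)=\emptyset$, i.e. when $p\in U_\sigma\setminus\bigcup_{j\in\tau}U_j$ (adopting the conventions $U_\emptyset=X$ and an empty union equal to $\emptyset$). Hence the pseudo-monomial vanishes on $\C$ iff $U_\sigma\subseteq\bigcup_{j\in\tau}U_j$. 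Specialising gives the three cases: $x_\sigma\in\J_\C$ iff $U_\sigma=\emptyset$; $\prod_{j\in\tau}(1-x_j)\in\J_\C$ iff $X\subseteq\bigcup_{j\in\tau}U_j$; and a mixed pseudo-monomial lies in $\J_\C$ iff $U_\sigma\subseteq\bigcup_{j\in\tau}U_j$. This already produces the three families of generators appearing in the statement.

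The substance of the argument is step (ii), translating minimality of a pseudo-monomial into the stated geometric minimality conditions. For pseudo-monomials, $g\mid f$ means $g=x_{\sigma'}\prod_{j\in\tau'}(1-x_j)$ with $\sigma'\subseteq\sigma$ and $\tau'\subseteq\tau$, so $f$ is minimal iff no proper sub-pseudo-monomial lies in $\J_\C$. I would run this type by type. For a pure monomial $x_\sigma$ the only divisors are $x_{\sigma'}$ with $\sigma'\subseteq\sigma$, so minimality is exactly that $\sigma$ be minimal with $U_\sigma=\emptyset$; symmetrically $\prod_{j\in\tau}(1-x_j)$ is minimal iff $\tau$ is minimal with $X\subseteq\bigcup_{j\in\tau}U_j$. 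For a mixed generator I would show that minimality forces $U_\sigma\neq\emptyset$ (otherwise $x_\sigma\in\J_\C$ divides it) and $\bigcup_{j\in\tau}U_j\neq X$ (otherwise some $\prod_{j\in\tau'}(1-x_j)\in\J_\C$ divides it), and conversely that, under $U_\sigma\subseteq\bigcup_{j\in\tau}U_j$ together with separate minimality of $\sigma$ and of $\tau$ for this inclusion, no proper divisor can lie in $\J_\C$.

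The step I expect to be the main obstacle is verifying that the two one-sided reductions — shrinking $\sigma$ alone and shrinking $\tau$ alone — suffice to rule out \emph{all} proper divisors, including those shrinking both index sets at once and the degenerate divisors (a pure monomial or a pure product of $(1-x_j)$'s). The key monotonicity remarks are that $\sigma'\subseteq\sigma$ gives $U_{\sigma'}\supseteq U_\sigma$ while $\tau'\subseteq\tau$ gives $\bigcup_{j\in\tau'}U_j\subseteq\bigcup_{j\in\tau}U_j$; using these, any proper divisor lying in $\J_\C$ would force either $U_{\sigma'}\subseteq\bigcup_{j\in\tau}U_j$ for some $\sigma'\subsetneq\sigma$ or $U_\sigma\subseteq\bigcup_{j\in\tau'}U_j$ for some $\tau'\subsetneq\tau$, each contradicting the respective minimality, and the degenerate cases are absorbed by the conditions $U_\sigma\neq\emptyset$ and $\bigcup_{j\in\tau}U_j\neq X$. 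Once this case analysis is complete, the listed elements are exactly the minimal pseudo-monomials of $\J_\C$; since a pseudo-monomial ideal is generated by its minimal pseudo-monomials, they constitute $\CF{\C}$ and generate $\J_\C$, which is the asserted identity.
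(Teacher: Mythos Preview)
The paper does not supply its own proof of this statement: it is quoted verbatim as \cite[Theorem 4.3]{curto2013neural} and used as a black box to compute $\CF{\C_m}$ and $\CF{\CR{k}}$ in Lemmas~\ref{lemmacfcm} and~\ref{lemmacfreg}. There is therefore no in-paper argument to compare your proposal against.

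That said, your outline is essentially the standard proof of this result and is correct. The two ingredients you identify are exactly what is needed: the equivalence $x_\sigma\prod_{j\in\tau}(1-x_j)\in\J_\C \iff U_\sigma\subseteq\bigcup_{j\in\tau}U_j$, and the reduction of pseudo-monomial divisibility to containment of the index sets $\sigma,\tau$. Your treatment of the mixed case is the delicate part and you handle it correctly: if a proper divisor $x_{\sigma'}\prod_{j\in\tau'}(1-x_j)$ lies in $\J_\C$ then $U_{\sigma'}\subseteq\bigcup_{j\in\tau'}U_j$, and combining this with $U_\sigma\subseteq U_{\sigma'}$ and $\bigcup_{j\in\tau'}U_j\subseteq\bigcup_{j\in\tau}U_j$ forces a violation of one of the two separate minimalities (or of $U_\sigma\neq\emptyset$, $\bigcup_{j\in\tau}U_j\neq X$ in the degenerate cases $\sigma'=\emptyset$ or $\tau'=\emptyset$). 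One small point worth making explicit in a full write-up: when you pass to a proper $\sigma'\subsetneq\sigma$ you should note that $\sigma'\cap\tau=\emptyset$ still holds, so the divisor is indeed a pseudo-monomial.
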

Proof of Lemma \ref{lemmacfcm}: Recall, for any $m\in\N\backslash\{1,2\}$, the set  $\U_m=\{U_1,\dots U_{m-1}\}$ where $U_i$ as the open interval $(i,m)\subseteq \R$ gives a open convex realization for $\C_m$ with $X=\R. $ Using this realization in Theorem \ref{thmcurtcan} we get the result of our lemma. 
\begin{proposition}
	Let $m\in\N\backslash\{1,2\}$. Then $G(\C_m)$ is a totally disconnected graph on $m-1$ vertices. 
\end{proposition}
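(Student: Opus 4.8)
The plan is to read off $GR(\C_m)$ directly from Lemma~\ref{lemmacfcm}, which tells us that $\CF{\C_m}=\{x_i(1-x_j)\mid i,j\in[m-1],\ i>j\}$ (note that $\C_m$ is a code on $m-1$ neurons, so the variables are $x_1,\dots,x_{m-1}$), and then take the $1$-skeleton, since $G(\C_m)$ is by definition the $1$-skeleton of $GR(\C_m)$.

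First I would pin down the vertex set. A singleton $\{i\}$ with $i\in[m-1]$ is a face of $GR(\C_m)$ exactly when no product $\prod_{\gamma\in\Gamma}\gamma$ over a subset $\Gamma\subseteq E_{\{i\}}=\{x_i,1-x_i\}$ lies in $\CF{\C_m}$. The only such products are $1$, $x_i$, $1-x_i$, and $x_i(1-x_i)$, and none of them has the form $x_a(1-x_b)$ with $a\neq b$; hence every singleton is a face of $GR(\C_m)$, so $V(G(\C_m))=[m-1]$ and the graph has exactly $m-1$ vertices.

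Next I would show $G(\C_m)$ has no edges. Fix distinct $i,j\in[m-1]$ and assume WLOG $i>j$. Choosing $\Gamma=\{x_i,\,1-x_j\}\subseteq E_{\{i,j\}}$ gives $\prod_{\gamma\in\Gamma}\gamma=x_i(1-x_j)\in\CF{\C_m}$ by Lemma~\ref{lemmacfcm}. Therefore $\{i,j\}\notin GR(\C_m)$, so $\{i,j\}$ is not an edge of its $1$-skeleton. Since $i,j$ were an arbitrary pair, $G(\C_m)$ has empty edge set, and combined with the vertex count this says $G(\C_m)$ is a totally disconnected graph on $m-1$ vertices.

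There is no genuine obstacle here: the conclusion follows mechanically once Lemma~\ref{lemmacfcm} describes the shape of $\CF{\C_m}$. The only point needing a little care is the bookkeeping of the vertex set — confirming that each singleton really does survive as a face of $GR(\C_m)$, so that the graph honestly has $m-1$ vertices and not fewer — together with the mild indexing subtlety that the variable set is $[m-1]$ (the number of neurons of $\C_m$) rather than $[m]$.
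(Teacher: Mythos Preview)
Your proposal is correct and follows essentially the same approach as the paper, which simply cites Lemma~\ref{lemmacfcm} together with the definition of the general relationship graph; your argument is a careful unpacking of precisely that. Your explicit check that each singleton survives as a face, and your correction of the index range to $[m-1]$ (the paper's lemma statement writes $[m]$), are genuine improvements in precision over the paper's one-line justification.
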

\no The proof of the above proposition comes from the Lemma \ref{lemmacfcm} and the definition of GRG. 

Using Theorem \ref{concomccg}, we see that the GRG and CCG are in duality for complete codes. We end this sub-section with noting down the following theorem. 
\begin{theorem}
Let $\C$ be a complete code. Then the complement of $G(\C)$ is a complete graph.  
\end{theorem}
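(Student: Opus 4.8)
The plan is to reduce the statement to the two structural facts already established in the excerpt: Theorem \ref{thcomiso}, which says every complete code $\C$ with $|\C|=m$ is isomorphic to $\CC{m}=\{\emptyset,1,12,\dots,12\cdots m-1\}$, and the proposition just proved that $G(\CC{m})$ is totally disconnected on $m-1$ vertices (equivalently, its complement is complete). So the only gap to bridge is that the general relationship graph is an \emph{isomorphism invariant} of the code, or at least that being totally disconnected is preserved under isomorphism of codes. I would first handle the small cases $m\le 2$ separately: for $m=1$, $\C\cong\{\emptyset\}$ has $\CF{\C}=\langle\rangle$ (or contains only degree-$1$ and degree-$n$ relations, but with a single codeword on $0$ neurons there are no vertices at all), so $G(\C)$ on $\le 1$ vertex is vacuously complete; for $m=2$, $\C\cong\{\emptyset,1\}$ gives $\CF{\C}=\emptyset$ and $G(\C)$ a single vertex, again complete.

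For the main case $m\ge 3$, the cleanest route is: by Theorem \ref{thcomiso} there is an isomorphism $f:\C\to\CC{m}$, and by Theorem \ref{thmnycc1} (since isomorphisms of codes correspond to neural ring isomorphisms and hence to compositions of code maps $(1)$--$(3)$: permutation, adding/deleting a trivial neuron, duplicating/deleting a duplicate neuron) we may write $f$ as a composition of such elementary maps. Then I would invoke the structural descriptions of $G(\D)$ in terms of $G(\C)$ worked out in Section \ref{sec: grg}: permutation only relabels vertices (so preserves total disconnectedness), adding/deleting a trivial neuron leaves $G(\C)$ unchanged, and duplicating the $i^{\text{th}}$ neuron adds vertex $x_{n+1}$ together with edges $\{x_jx_{n+1}\mid x_jx_i\in E(G(\C))\}$ — so if $G(\C)$ has no edges, neither does $G(\D)$ (and deleting a duplicate is the reverse, removing a vertex and incident edges, which also cannot create an edge). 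Hence each elementary code map of types $(1)$--$(3)$ preserves the property ``$G$ has empty edge set.'' Chaining these along the decomposition of $f$, and using that $G(\CC{m})$ is totally disconnected, gives that $G(\C)$ is totally disconnected, i.e. the complement of $G(\C)$ is complete.

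The main obstacle I anticipate is the bookkeeping around which elementary maps can appear in an isomorphism and in which \emph{direction} they are applied. Theorem \ref{thmnycc1} states that a neural ring isomorphism corresponds to a composition of maps $(1)$--$(3)$, but one must be careful that $f:\C\to\CC{m}$ (rather than $\CC{m}\to\C$) is realized in the right direction, and that ``deleting a duplicate neuron'' also preserves total disconnectedness — this is the reverse of the duplication case and needs the (easy) observation that removing a vertex from an edgeless graph keeps it edgeless. An alternative, perhaps even shorter, argument that sidesteps the elementary-map decomposition: observe directly that for a complete code, for \emph{every} pair of neurons $i,j$ present in some codeword, the codewords containing $i$ and those containing $j$ are nested (because the codewords themselves are totally ordered by inclusion, by the Remark following the definition of complete code), so one of the two simple trunks sits inside the other; this forces a two-variable pseudo-monomial relating $x_i,x_j$ to be absent — wait, one must check this carefully against Youngs' algorithm — and hence $\{i,j\}$ is an edge of $G(\C)$... which would give the \emph{opposite} conclusion, so this direct route must instead show such a two-variable relation is always \emph{present}, matching Lemma \ref{lemmacfcm}. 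I would present the invariance-under-isomorphism argument as the primary proof since it reuses machinery already in the paper, and only fall back on a direct canonical-form computation if the referee wants self-containedness.

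\begin{proof}
If $m=1$ then $\C\cong\{\emptyset\}$ and $G(\C)$ has at most one vertex, so it is (vacuously) complete and its complement is complete. If $m=2$ then $\C\cong\{\emptyset,1\}$, $\CF{\C}$ contains no two-variable pseudo-monomial, and $G(\C)$ is a single vertex; again the complement is complete.

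Assume $m\ge 3$. By Theorem \ref{thcomiso} there is an isomorphism $f:\C\to\CC{m}$. By Theorem \ref{thmnycc1}, $f$ (being an isomorphism of codes) is a composition of elementary code maps of types $(1)$--$(3)$: permutations, adding or deleting a trivial neuron, and duplicating a neuron or deleting a duplicate neuron. We claim each such map preserves the property that the general relationship graph has empty edge set. Indeed, from the analysis in Section \ref{sec: grg}: a permutation merely relabels the vertices, hence sends an edgeless graph to an edgeless graph; adding or deleting a trivial neuron leaves the general relationship graph unchanged; duplicating the $i^{\text{th}}$ neuron produces $E(G(\D))=E(G(\C))\cup\{x_jx_{n+1}\mid x_jx_i\in E(G(\C))\}$, which is empty whenever $E(G(\C))$ is empty, and deleting a duplicate neuron only removes a vertex together with its incident edges, so it too cannot create an edge.

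Now $G(\CC{m})$ is a totally disconnected graph on $m-1$ vertices, by Lemma \ref{lemmacfcm} and the definition of the general relationship graph. Applying the elementary maps comprising $f^{-1}$ one at a time and using the claim above, we conclude that $G(\C)$ also has empty edge set. Hence the complement of $G(\C)$ is a complete graph.
\end{proof}
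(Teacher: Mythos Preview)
Your overall strategy is the same as the paper's: reduce to $\CC{m}$ via Theorem~\ref{thcomiso} and then use that $G(\CC{m})$ is totally disconnected. The paper in fact offers no proof beyond citing Theorem~\ref{concomccg} and the preceding proposition, so your attempt to justify isomorphism-invariance of ``$G(\C)$ has empty edge set'' via the elementary-map analysis of Section~\ref{sec: grg} is more thorough than what the paper provides.

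There is, however, a genuine gap in your invariance step. You invoke Theorem~\ref{thmnycc1} to decompose the isomorphism $f:\C\to\CC{m}$ into elementary maps of types $(1)$--$(3)$, but Theorem~\ref{thmnycc1} characterizes \emph{neural ring} isomorphisms, whereas the map produced in Theorem~\ref{thcomiso} is an isomorphism in Jeffs' sense (Definition~\ref{jeffmorphis}). The paper only records the implication in one direction---every neural ring homomorphism yields a morphism---and never establishes that every Jeffs isomorphism arises from a neural ring isomorphism. So your appeal to Theorem~\ref{thmnycc1} is not justified by anything in the paper.

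The fix is short and specific to complete codes, and avoids the general question entirely. Given the chain $\sigma_1\subsetneq\cdots\subsetneq\sigma_m$ on $n$ neurons: any $j\in[n]\setminus\sigma_m$ is a trivial off neuron; any $j\in\sigma_1$ is a trivial on neuron; and for each $i\ge 2$ the neurons in $\sigma_i\setminus\sigma_{i-1}$ all lie in exactly the same codewords, hence are duplicates of one another. Deleting trivial neurons, deleting all but one neuron from each block $\sigma_i\setminus\sigma_{i-1}$, and then applying a permutation carries $\C$ to $\CC{m}$ using only maps of types $(1)$--$(3)$. With this observation in place, your Section~\ref{sec: grg} edge-set calculations apply verbatim and the proof goes through.
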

\subsection{GRG for 2-regular codes}
Recall for any $k\in \N\backslash \{1,2\}$, $ \CR{k}=\{1,2,\dots, k, 12,23,\dots, k-1k,k1\}$ is a 2-regular code with cardinality $2k$. We will study $G(\CR{k})$ in this sub-section.  Using Sage math algorithm given by Petersen et al. \cite{petersen2018neural} we obtain $\CF{\CR{k}}$ for some $k\in N\backslash\{1,2\}$. They are as follows:
\begin{enumerate}
	\item  $\CF{\CR{3}}= \left\{(1-x_{3}) \cdot (1-x_{2}) \cdot (1-x_{1}), x_{3} \cdot x_{2} \cdot x_{1}\right\}$
	\item $\CF{\CR{4}}= \left\{(1-x_{4})\cdot(1-x_{3}) \cdot (1-x_{2}) \cdot (1-x_{1}), x_{3} \cdot x_{1}, x_{4} \cdot x_{2}\right\}$
	\item $\CF{\CR{5}}= \left\{(1-x_{5})\cdot(1-x_{4})\cdot(1-x_{3}) \cdot (1-x_{2}) \cdot (1-x_{1}),x_{3} \cdot x_{1}, x_{4} \cdot x_{1}, x_{4} \cdot x_{2}, x_{5}\cdot x_{2}, x_{5} \cdot x_{3}\right\}$
	\item $\CF{\CR{6}}= \{(1-x_{6})\cdot(1-x_{5})\cdot(1-x_{4})\cdot(1-x_{3}) \cdot (1-x_{2}) \cdot (1-x_{1}),x_{3} \cdot x_{1}, x_{4} \cdot x_{1}, x_{4} \cdot x_{2}, x_{6} \cdot x_{2}, x_{6} \cdot x_{3}, x_{6} \cdot x_{4}, x_{5} \cdot x_{1}, x_{5} \cdot x_{2}, x_{5} \cdot x_{3}\}$
\end{enumerate}
 Figure \ref{Fig2reggrg} represents GRG of the above discussed 2-regular codes. 
\begin{figure}[h!]
	\centering
	\begin{subfigure}[b]{0.2\linewidth}
		\centering 
		\begin{tikzpicture}  
			\vertex (0) at (0,0) [label=below:$3$] {};
			\vertex (1) at (1,0) [label=below:1]{};
			\vertex (12) at (0,1) [label=left:2]{};
			\path[-]
			
			(0) edge (1)
			(0) edge (12)
			
			(1) edge (12)
			;
		\end{tikzpicture}
		\caption{$G({\CR{3}})$} \label{}
	\end{subfigure}
	\begin{subfigure}[b]{0.2\linewidth}
		\centering
		\begin{tikzpicture}  
			\vertex (1) at (0,0) [label=below:1] {};
			\vertex (123) at (1,0) [label=below:2]{};
			\vertex (emp) at (0,1) [label=left:$4$]{};
			\vertex (1234) at (1,1) [label=right:3]{};
			\path[-]
			(1) edge (emp)
			(1) edge (123)
			
			(123) edge (1234)
			
			(emp) edge (1234)
			;
		\end{tikzpicture}
		\caption{$G(\CR{4})$} \label{}
	\end{subfigure}
	\begin{subfigure}[b]{0.2\linewidth}
		\centering
		\begin{tikzpicture}  
			\vertex (1) at (0,0) [label=below:1] {};
			\vertex (2) at (1,0) [label=below:2]{};
			\vertex (4) at (0,1) [label=above:$4$]{};
			\vertex (3) at (1,1) [label=right:3]{};
			\vertex (5) at (-0.707,0.5) [label=left:5]{};
			\path[-]
			(1) edge (5)
			(1) edge (2)
			
			(4) edge (5)
			
			(4) edge (3)
			(2) edge (3)
			;
		\end{tikzpicture}
		\caption{$G(\CR{5})$} \label{}
	\end{subfigure}
	\begin{subfigure}[b]{0.2\linewidth}
		\centering
		\begin{tikzpicture}  
			\vertex (1) at (0,0) [label=below:1] {};
			\vertex (2) at (1,0) [label=below:2]{};
			\vertex (4) at (1,1) [label=above:$4$]{};
			\vertex (3) at (1.5,0.5) [label=right:3]{};
			\vertex (6) at (-0.5,0.5) [label=left:6]{};
			\vertex (5) at (0,1) [label=above:5]{};
			\path[-]
			(1) edge (6)
			(6) edge (5)
			(1) edge (2)
			
			(4) edge (5)
			
			(4) edge (3)
			(2) edge (3)
			;
		\end{tikzpicture}
		\caption{$G(\CR{6})$} \label{}
	\end{subfigure}
	\caption{GRG for some 2-Regular codes}
	\label{Fig2reggrg}
\end{figure}
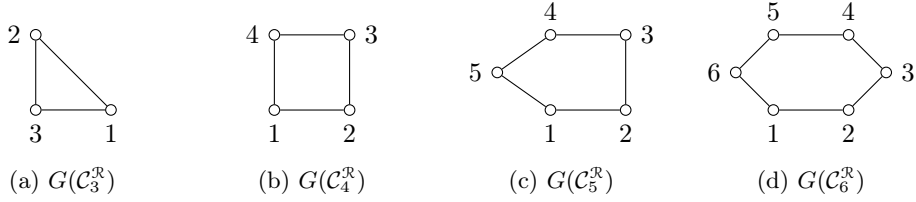
Observe that for $ k\in \{3,4,5,6\}$ we have  $G(\CR{k})$ as a 2 regular graph on $k$ vertices.  We further claim this is true for all $k\in \N\backslash\{1,2\}.$

\begin{lemma} \label{lemmacfreg}
	Let $ \CR{k}=\{1,2,\dots, k, 12,23,\dots, k-1k,k1\}$ for any $ k\in \N \backslash \{1,2,3\}$. Then $$\CF{\CR{k}}=\left\{\displaystyle\prod_{i=1}^{k}(1-x_i) \right\} \cup \{x_ix_j\mid i>j \text{ and } (i-j)\cong 1 \mod k\}.$$
\end{lemma}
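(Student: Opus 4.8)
The plan is to imitate the proof of Lemma~\ref{lemmacfcm}: produce an explicit open realization of $\CR{k}$ and feed it into Theorem~\ref{thmcurtcan}. A clean realization lives on the circle $X=\mathbb{R}/k\mathbb{Z}$: put the $i$-th ``vertex'' at the point $i$ and, for a fixed $\delta\in(0,\tfrac12)$, let $U_i=(i-\delta,\,i+1+\delta)$ be the open arc running from just before vertex $i$ to just after vertex $i+1$ (indices mod $k$); equivalently one may use the open convex realization of Figure~\ref{figrealcrk}, which has the same intersection pattern. First I would record the elementary facts that $U_i\cap U_{i+1}$ is a short arc around vertex $i+1$ meeting no $U_j$ with $j\notin\{i,i+1\}$, that $U_i\cap U_{i+1}\cap U_{i+2}=\emptyset$, that each $U_i$ has a ``private'' sub-arc $(i+\delta,\,i+1-\delta)$ contained in no other $U_j$, and that $\bigcup_{i=1}^k U_i=X$ while no proper sub-union equals $X$. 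These checks give $\C(\U)=\CR{k}$, so Theorem~\ref{thmcurtcan} applies.

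Next I would read off the three families of generators of $\CF{\CR{k}}$ furnished by Theorem~\ref{thmcurtcan}. (i) The sets $\sigma$ minimal with $U_\sigma=\emptyset$ are exactly the minimal non-faces of $\Delta(\CR{k})$; for $k\ge 4$ this complex is the boundary of a $k$-gon, whose minimal non-faces are precisely the non-edges of the $k$-cycle, i.e.\ the pairs $\{i,j\}$ that are non-adjacent in the cycle --- and these are exactly the pairs indexing the monomials $x_ix_j$ in the statement. (ii) The only $\tau$ minimal with $X\subseteq\bigcup_{i\in\tau}U_i$ is $\tau=[k]$, since deleting any single $U_i$ leaves its private sub-arc uncovered; this yields the single generator $\prod_{i=1}^k(1-x_i)$. (iii) For a mixed generator $x_\sigma\prod_{j\in\tau}(1-x_j)$ one needs a nonempty face $\sigma$, a nonempty $\tau$ disjoint from $\sigma$, and minimality of $\sigma,\tau$ with $U_\sigma\subseteq\bigcup_{j\in\tau}U_j$. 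But every nonempty face $\sigma$ of $\Delta(\CR{k})$ is a single vertex or an edge, and in both cases $U_\sigma$ contains a point lying in no $U_j$ with $j\notin\sigma$ (the private sub-arc when $\sigma=\{i\}$, the arc around vertex $i+1$ when $\sigma=\{i,i+1\}$); hence $U_\sigma\subseteq\bigcup_{j\in\tau}U_j$ is impossible and there are no mixed generators. Since the elements of families (i) and (ii) are pairwise distinct pseudo-monomials, none dividing another, they constitute $\CF{\CR{k}}$, which is the claimed set.

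The step that needs the most care is (iii): verifying that no atom $U_\sigma$ is swallowed by the union of the remaining atoms, equivalently that every codeword of $\CR{k}$ keeps a nonempty ``top'' cell relative to the other generators; this is where the concrete geometry of the arcs is used. I would also flag that the hypothesis $k\neq 3$ is genuinely needed in (i): when $k=3$ the complex $\Delta(\CR{3})$ is the boundary of a triangle, whose unique minimal non-face is the whole triple $\{1,2,3\}$, producing the degree-three generator $x_1x_2x_3$ rather than degree-two monomials --- exactly the computed value of $\CF{\CR{3}}$ listed above. Everything else is routine bookkeeping once the realization is fixed; one could instead run Youngs' algorithm of Section~\ref{youngsalgo} on the $2k$ codewords directly, but the combinatorial bookkeeping there is considerably heavier.
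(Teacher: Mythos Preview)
Your proposal is correct and follows essentially the same route as the paper: exhibit an explicit open realization of $\CR{k}$ and then read off the three families of generators from Theorem~\ref{thmcurtcan}. The paper uses the planar realization of Figure~\ref{figrealcrk} and disposes of the computation in two sentences, whereas you work on the circle $\mathbb{R}/k\mathbb{Z}$ (permissible, since Theorem~\ref{thmcurtcan} requires only open sets in a stimulus space, not convexity) and supply the verifications --- in particular the absence of mixed generators in case~(iii) and the role of the hypothesis $k\ne 3$ --- that the paper leaves implicit.
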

\begin{proof}
Let $\U=\{U_1,\dots,U_k\}$ be a realization of $\CR{k}$ as given in Figure \ref{figrealcrk}.	Since, $\emptyset\notin\CR{k}$ we have $X=\bigcup_{i\in[k]} U_i$ we have $\displaystyle\prod_{i=1}^{k}(1-x_i) \in \CF{\CR{k}}$. Further it is clear from the realization that $U_i\cap U_j= \emptyset $ if and only if $i$ and $j$ are not consecutive (Note that we consider 1 and $k$ as consecutive).  Therefore we get the second part of $\CF{\CR{k}}$ using Theorem \ref{thmcurtcan}. Hence the result. 
\end{proof}
\begin{proposition}
	Let $m\in\N\backslash\{1,2\}$. Then $G(\CR{k})$ is a 2-regular graph on $k$ vertices. 
\end{proposition}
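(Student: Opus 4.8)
The plan is to read off the edge set of $G(\CR{k})$ directly from the canonical form supplied by Lemma \ref{lemmacfreg}, together with the general definition of the general relationship graph as the $1$-skeleton of the complex $GR(\CR{k})$. The first point to flag is that $\CR{k}$ is \emph{not} a degree two code, since $\CF{\CR{k}}$ contains the pseudo-monomial $\prod_{i=1}^{k}(1-x_i)$ of degree $k\ge 4$; hence the degree two definition does not apply verbatim and one genuinely has to work with $GR(\CR{k})$. I would begin by checking that all $k$ singletons $\{i\}$ are vertices: the only pseudo-monomials that can be built from $E_{\{i\}}=\{x_i,1-x_i\}$ are $x_i$ and $1-x_i$ (the product $x_i(1-x_i)$ vanishes), neither of which lies in $\CF{\CR{k}}$, so $|V(G(\CR{k}))|=k$.

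Next I would determine the edges. Fix a pair $\{i,j\}$. Every pseudo-monomial $\prod_{\gamma\in\Gamma}\gamma$ with $\Gamma\subseteq E_{\{i,j\}}$ uses at most one literal per variable and so has degree at most $2$ in $x_i,x_j$. Comparing this against Lemma \ref{lemmacfreg}, the generator $\prod_{i=1}^{k}(1-x_i)$ has degree $k>2$ and can therefore never be obtained in this way, while the remaining generators of $\CF{\CR{k}}$ are the pure two-variable products $x_ax_b$; by the open realization used in the proof of Lemma \ref{lemmacfreg} these occur exactly for the non-(cyclically)-adjacent pairs $\{a,b\}$, and in particular $\CF{\CR{k}}$ contains no mixed-literal term such as $x_i(1-x_j)$. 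Consequently the only candidate that could obstruct the edge $\{i,j\}$ is $x_ix_j$, so by the definition of $GR(\CR{k})$ the pair $\{i,j\}$ is an edge of $G(\CR{k})$ if and only if $x_ix_j\notin\CF{\CR{k}}$, that is, if and only if $i$ and $j$ are cyclically adjacent (with $1$ and $k$ regarded as adjacent).

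This identifies $G(\CR{k})$ with the cycle on the vertex set $[k]$ whose edges are $\{1,2\},\{2,3\},\dots,\{k-1,k\},\{k,1\}$. Each vertex $i$ then has exactly the two neighbours $i-1$ and $i+1$ modulo $k$, so every vertex has degree $2$ and $G(\CR{k})$ is $2$-regular on $k$ vertices. For the boundary value $k=3$, which is excluded from Lemma \ref{lemmacfreg}, I would argue directly from the listed data $\CF{\CR{3}}=\{(1-x_3)(1-x_2)(1-x_1),\,x_3x_2x_1\}$: both generators have degree $3$, so no product of degree at most $2$ coming from some $E_{\{i,j\}}$ can lie in $\CF{\CR{3}}$, every pair is an edge, and $G(\CR{3})=K_3$ is again $2$-regular.

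The main obstacle is bookkeeping rather than conceptual: I must justify carefully that the high-degree generator $\prod_{i=1}^{k}(1-x_i)$ contributes nothing to the $1$-skeleton, and that $\CF{\CR{k}}$ carries no two-variable pseudo-monomial other than the pure products $x_ax_b$, so that the edge criterion collapses exactly to cyclic adjacency. Once this is pinned down, matching ``cyclically adjacent'' to the $k$-cycle and hence to $2$-regularity is immediate, and the excluded case $k=3$ is dispatched by its explicit canonical form.
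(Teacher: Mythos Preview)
Your argument is correct and follows exactly the route the paper indicates (``comes from Lemma \ref{lemmacfreg} and the definition of GRG''), only you spell out the details the paper omits. In particular, your separate treatment of $k=3$ is a genuine improvement, since Lemma \ref{lemmacfreg} is stated only for $k\ge 4$ while the proposition includes $k=3$.
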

\no The proof of the above proposition comes from the Lemma \ref{lemmacfreg} and the definition of GRG. 

	\bibliographystyle{plain}
	\bibliography{refs}
	%\bibliography{refs}
	%	\input{Appendex1}
	Authors Affiliation:  \vspace{0.2cm}\\
	Suhith K N \\ Research Scholar \\ Department of Mathematics \\ Shiv Nadar Institution of Eminence (Deemed to be University) \\ Delhi-NCR \\ India \\ Email: sk806@snu.edu.in \\
	Suhith's research is partially supported by Inspire fellowship from DST grant IF190980. \vspace{0.2cm} \\
	Neha Gupta \\ Assistant Professor \\ Department of Mathematics \\ Shiv Nadar Institution of Eminence (Deemed to be University) \\ Delhi-NCR \\ India \\ Email: neha.gupta@snu.edu.in  \vspace{0.6cm}\\
	
\end{document}